\newtheorem{Tma}{Theorem}[section]
\newtheorem{lemma}[Tma]{Lemma}
\newtheorem{conjecture}{Conjecture}[section]
\newtheorem{proposition}[Tma]{Proposition}
\newtheorem{theorem}[Tma]{Theorem}
\theoremstyle{definition}
\newtheorem{remark}[Tma]{Remark}
\newcommand{\ext}{\mathcal{E}}
\newcommand{\HM}{\mathcal{H}}
\newcommand{\diag}{{\rm diag}}
\newcommand{\Tri}{\tau}
\newcommand{\Inv}{\iota}
\newcommand{\LaRa}{\langle\pm \rangle}
\newcommand{\myem}[1]{{\bfseries \textit{#1}}}  
\def\figa{(i)}
\def\figb{(ii)}
\newcounter{ithmcount}
\newenvironment{iprf}{\begin{list}{{\rm
	\alph{ithmcount})}}{\usecounter{ithmcount}\labelwidth-5pt
      \leftmargin0pt \topsep3pt \itemsep1pt \parsep2pt}}{\end{list}}
\newenvironment{ithm}{\begin{list}{{\rm \alph{ithmcount})}}{\usecounter{ithmcount}\labelwidth18pt
      \leftmargin18
pt \topsep3pt \itemsep1pt \parsep2pt}}{\end{list}}
\title{Constructing cocyclic Hadamard matrices of order $4p$}
\author{Santiago Barrera Acevedo}
\address{Monash University, School of Mathematical Sciences, Clayton 3800 VIC, Australia}
\email{Santiago.Barrera.Acevedo@monash.edu}
\author{Padraig \'O Cath\'ain}
\address{Worcester Polytechnic Institute, Mathematical Sciences Department, Worcester, MA, USA}
\email{pocathain@wpi.edu}
\author{Heiko Dietrich}
\address{Monash University, School of Mathematical Sciences, Clayton 3800 VIC, Australia}
\email{heiko.dietrich@monash.edu}
\renewcommand{\leq}{\leqslant}
\begin{document}

\reversemarginpar

\begin{abstract}
  \noindent
  Cocyclic Hadamard matrices (CHMs) were introduced by de Launey and Horadam as a class of Hadamard matrices  with interesting algebraic properties. \'O Cath\'ain and R\"oder described a classification algorithm for CHMs of order $4n$ based on relative difference sets in groups of order $8n$; this led to the classification of all CHMs of order at most 36. Based on work of de Launey and Flannery, we describe a classification algorithm for CHMs of order $4p$ with $p$ a prime; we prove refined structure results and provide a classification for $p \leqslant 13$. Our analysis shows that every CHM of order $4p$ with $p\equiv 1\bmod 4$ is equivalent to a Hadamard matrix with one of five distinct block structures, including Williamson type and (transposed) Ito matrices. If $p\equiv 3 \bmod 4$, then every CHM of order $4p$ is equivalent to a Williamson type or (transposed) Ito matrix.
\newline
\newline
MSC: 05B10, 05B20 and 20J06.
\end{abstract}

\maketitle
\vspace*{-1cm}

\section{Introduction}
A Hadamard matrix (HM) of order $n$ is an  $n\times n$ matrix $H$ over $\{-1,1\}$ such that $HH^\intercal=nI_n$ where $I_n$ is the $n\times n$ identity matrix.
HMs have found numerous applications in areas such as cryptography, coding theory, and signal processing; we refer to the books of Horadam \cite{Horadam2} and Seberry~\cite{SeberryBook} for details and references.
Straightforward counting arguments show that the order of a HM is necessarily $1$, $2$ or $4t$ for $t \in \mathbb{N}$.
It is not known whether there exists a HM of order $4t$ for every value of $t$.
This is the \myem{Hadamard conjecture}, which is one of the best known problems in combinatorial design theory.

HMs are proven to exist for all $t \leq 166$, so at the time of writing the smallest order for which the existence of a HM seems to be unknown is $668$, see \cite{Horadam2}.
De Launey and Gordon \cite{deLauneyGordon} investigated the asymptotic number of HMs. They proved that $S(x)/x> (x/\log(x))\text{exp}((0.8178+o(1))(\log\log\log x)^2)$, where $S(x)$ is the number of positive integers $n\leq x$ such that there exists a HM of order $4n$; they also showed that this bound is hard to improve.
 Despite the challenges posed by the existence problem, the classification problem for HMs is well studied. While the existence of HMs for general order $4n$ is open, known classifications for small $n$ indicate that  the numbers of HMs of order $4n$ grow rapidly. It is therefore useful to study HMs up to equivalence, where matrices $A$ and $B$ are equivalent, written $A\equiv B$, if $A$ can be obtained from $B$ by row/column permutations and multiplication of rows/columns by $-1$.

The classification of HMs of orders less than 30, up to equivalence, was achieved through the efforts of numerous mathematicians in the 1980s and 1990s, see \cite{Spence}.
The classification of HMs of order 32 was completed in 2010 with the determination of $13,710,027$  equivalence classes, see \cite{KharaghaniEtAl2}.
The profusion of equivalence classes, even at small orders, motivates to look for  HMs of special types: as a generalisation of back-circulant matrices, a HM $H$ of order $4n$ is \myem{group-developed} if its entries are $h_{i,j}=\varphi(g_ig_j)$, where $G=\{g_1,\ldots,g_{4n}\}$ is a group of size $4n$ and $\varphi\colon G\to \{\pm 1\}$ is a map. It is known that the order of a group-developed HM is a perfect square, see also Lemma \ref{Wallis}, so there exist orders for which there are no group-developed HMs. Generalising group-developed HMs, \myem{cocyclic HMs} (CHMs) were introduced by de Launey and Horadam  \cite{deLaney-Horadam} as a class of HMs with additional algebraic properties; here the entries are defined by $h_{i,j}=\psi(g_i,g_j)\varphi(g_ig_j)$ where $\psi\colon G\times G\to \{\pm 1\}$ is a \emph{2-cocycle} and $\varphi\colon G\to \{\pm1\}$ a map; we give full details in Section \ref{seccoc}. The \myem{Cocyclic Hadamard Conjecture} \cite[Research Problem~38]{Horadam2} claims that for every $n$ there exists a CHM of order $4n$; in fact, many of the classical constructions, such as Ito and Williamson, are cocyclic. There are two excellent surveys of  cocyclic development in the literature:  de Launey and Flannery \cite{deLauneyFlannery} give an exposition of the theory of cocyclic development in its most general form; Horadam's monograph  \cite{Horadam2} contains an extensive discussion of cocyclic development for the special case of Hadamard matrices. \'{A}lvarez et al.\  \cite{SevAb, SevDih} have previously developed a computational framework for classifying Ito and Williamson CHMs.

\subsection{Main results}
We consider CHMs of order $4p$ with $p>3$ a prime. In Section \ref{secNot} we discuss preliminaries and introduce some notation. In Section \ref{secGroup} we recall results about the indexing group of a CHM of order $4p$. In Section \ref{secBLOCK} we independently recover the block structure proved in de Launey and Flannery \cite{deLauneyFlannery}: we prove in Theorem \ref{thmMain} that every CHM of order $4p$ is equivalent to a HM of the form
{\small\[
\HM(W,X,Y,Z)^{a,b}_{r,s,t} =\left[\begin{array}{rrrr}
W & X^a & Y^b & Z^{ab}\\
X & (-1)^rW^a & Z^b & (-1)^rY^{ab}\\
Y & (-1)^tZ^a & (-1)^sW^b & (-1)^{s+t}X^{ab}\\
Z & (-1)^{r+t}Y^a & (-1)^sX^b & (-1)^{r+s+t}W^{ab}
\end{array}\right]\]}for certain parameters $a,b,r,s,t$ and back-circulant $p\times p$ blocks $W,X,Y,Z$. We identify which structures yield equivalent HMs and show that, if $p\equiv 3 \bmod 4$, then all those CHMs are equivalent to a Williamson type or (transposed) Ito matrix, see Theorem \ref{propequivalences}; if $p\equiv 1\bmod 4$, then all those CHMs are equivalent to at least one of five block structures. In Section \ref{secClass} we  describe a new algorithm for classifying CHMs of order $4p$ up to equivalence; we implemented our algorithm in Magma and classified CHMs for primes $p\leq 13$.

\section{Preliminaries}\label{secNot}

\subsection{Central group extensions}\label{secExt}  All groups are finite and written multiplicatively. We denote by $C_n$ and $D_{2n}$ the cyclic group of size $n$ and dihedral group of size $2n$, respectively; we write $C_n^2=C_n\times C_n$. Making the usual identifications, we say a group $E$ is an \emph{extension} of a group $G$ by a group $N$ if the latter is a normal subgroup of $E$ with $E/N\cong G$. The extension is \emph{central} if $N$ lies in the centre $Z(E)$ of $E$. If $N$ is abelian, then every central extension of $G$ by $N$ is isomorphic to a group
\[E_\psi=\{(g,a)\colon g\in G, a\in N\}\quad\text{with multiplication}\quad (g,a)(h,b)=(gh,ab\psi(g,h)),\]
where $\psi\in Z^2(G,N)$ is a $2$-\emph{cocycle}, that is, a map $G\times G\to N$ with  $\psi(g,1)=\psi(1,g)=1$  and $\psi(g,hk)\psi(h,k)=\psi(g,h)\psi(gh,k)$ for all $g,h,k\in G$. Such a cocycle is a $2$-\emph{coboundary} if $\psi(g,h)=\tau(gh)\tau(g)^{-1}\tau(h)^{-1}$ for some map $\tau\colon G\to N$ with $\tau(1)=1$.
 The extension $E_{\psi}$ is \emph{split} if $E_{\psi}$ contains a subgroup isomorphic to $G$ which intersects trivially with $N$; this is denoted by $E_{\psi} = G \ltimes N$. It is known that  $E_{\psi}$ is split if and only if $\psi$ is a $2$-coboundary. For examples of cocycles we refer to \cite[Section 6.2.1]{Horadam2}.

\subsection{Williamson type and Ito matrices}\label{WILL-ITO}
An $n\times n$ matrix $M=(m_{r,c})$ is \emph{circulant} if $m_{r,c}=m_{0,c-r}$ for all $r,c=0,\dots,n-1$, where arithmetic is carried out modulo $n$. Similarly, $M = (m_{r,c})$ is \emph{back-circulant} if $m_{r,c} = m_{0,c+r}$ for all $r,c = 0, \dots, n-1$.  Let $R$ be a ring and let $G=\{g_0,\ldots,g_{n-1}\}$ be a group of order $n$ with fixed element ordering. The matrix $M$ is \emph{group-developed over $G$} (or $G$-developed) with coefficients in $R$ if there is a map $\phi\colon G\to R$ such that $M = (m_{r,c})$ with each $m_{r,c} = \phi(g_{r}g_{c})$. For example, a back-circulant matrix of order $n$ is $C_n$-developed, and a circulant matrix is equivalent to a group developed matrix. Group-developed HMs are precisely equivalent to \emph{Menon-Hadamard} difference sets, which have been studied extensively in the literature \cite[Section~2.3.1]{Horadam2}.

A \myem{Williamson type (Hadamard) matrix} (WTM) is a HM of the form \figa\ as in Figure 1, where the blocks $W, X, Y$, and $Z$ are $n\times n$ such that
\begin{equation}\label{wm1e1}
WW^\intercal+XX^\intercal+YY^\intercal+ZZ^\intercal=4nI_n
\end{equation}
and $AB^\intercal=BA^\intercal$ for all $A,B\in \{W,X,Y,Z\}$. If the blocks are symmetric circulant, then $H$ is a Williamson (Hadamard) matrix (WM), see the construction in  \cite[(8)]{Will}. Generalisations of WMs  with back-circulant, circulant but not symmetric, or group-developed blocks are known; a short historical overview of WTM is given in \cite{WMBAD}. Other variants and larger templates have been reported in \cite{GoethalsSeidal,SeberryBook}.
A particular generalisation of  WMs are  \myem{Ito (Hadamard) matrices} (IMs); these are HMs of the form \figb\ in Figure~1, where the blocks $W, X, Y, Z$ are $n\times n$  circulant, satisfying \eqref{wm1e1} and $WX^\intercal+YZ^\intercal = XW^\intercal+ZY^\intercal$.

\begin{figure}
  {\arraycolsep=1.5pt\def\arraystretch{0.8}
\begin{equation*}
{\rm\figa}:\quad\left(
\begin{array}{rrrr}
W & X & Y & Z\\
X & -W & Z & -Y\\
Y & -Z & -W & X\\
Z & Y & -X & -W
\end{array}
\right)\hspace*{2cm} {\rm\figb}:\quad\left(
\begin{array}{rrrr}
W & X & Y & Z\\
X & -W & Z & -Y\\
Y^\intercal & -Z^\intercal & -W^\intercal & X^\intercal\\
Z^\intercal & Y^\intercal & -X^\intercal & -W^\intercal
\end{array}
\right)
\end{equation*}

\vspace*{-2ex}

\caption{The block structure of Williamson and Ito matrices as defined in Section \ref{WILL-ITO}.}\label{figWM}
}
  \end{figure}

\subsection{Cocyclic  matrices}\label{cocyclicmatrices}\label{seccoc} CHMs are generalisations of group-developed HMs. Let $G=\{g_0,\ldots,g_{n-1}\}$ be a group and assume  $g_0=1$. A matrix $H$ over $\{\pm1\}$ of order $n$ is \emph{cocyclic} with \myem{indexing group}  $G$ if
\[H\equiv \left[\psi(g_i,g_j)\phi(g_ig_j))\right]_{i,j}\]
for some 2-cocycle $\psi\in Z^2(G,\LaRa)$ and map $\phi\colon G\to \LaRa$ with $\phi(1)=1$. Every HM of order at most 20 is cocyclic, see \cite[p. \ 86]{CatRod}.
For more examples of CHMs we refer to \cite[Section 6.4]{Horadam2}.
If $\phi$ is trivial, then $H$ is \myem{pure cocyclic}; if $\psi$ is trivial, then $H$ is group-developed. The following is a well-known observation.

\begin{lemma}\label{Cohomologous}\label{Wallis}
Let $H$ be a cocyclic matrix over $\{\pm 1\}$ with cocycle $\psi$ and map $\phi$; then the following hold:
\begin{ithm}
\item $H$ is equivalent to a pure cocyclic matrix with cocycle $\psi'(g,h)=\psi(g,h)\phi(gh)\phi(g)^{-1}\phi(h)^{-1}$.
\item If $H$ is group-developed and Hadamard, then the order of $H$ is a perfect square.
\end{ithm}
\end{lemma}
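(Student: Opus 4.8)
The plan is to handle the two parts separately, since part (a) is a change-of-coordinates (equivalence) argument while part (b) is a short norm computation exploiting the regularity of group-developed matrices.

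For part (a), I would use the two scaling operations permitted by equivalence: multiplication of rows and of columns by $\pm 1$. Writing $H=[\psi(g_i,g_j)\phi(g_ig_j)]_{i,j}$, I would scale row $i$ by $\phi(g_i)$ and column $j$ by $\phi(g_j)$. Since all values lie in $\{\pm 1\}$ we have $\phi(g)=\phi(g)^{-1}$, so the $(i,j)$ entry becomes $\psi(g_i,g_j)\phi(g_ig_j)\phi(g_i)^{-1}\phi(g_j)^{-1}=\psi'(g_i,g_j)$. The resulting matrix is then $[\psi'(g_i,g_j)]_{i,j}$ with trivial map, i.e.\ pure cocyclic, and it is equivalent to $H$ by construction. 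It remains to verify that $\psi'$ is a genuine $2$-cocycle: the factor $\phi(gh)\phi(g)^{-1}\phi(h)^{-1}$ is by definition a coboundary (using $\phi(1)=1$), and a short direct substitution into the cocycle identity—or the standard fact that a cocycle times a coboundary is again a cocycle—confirms that $\psi'$ satisfies $\psi'(g,1)=\psi'(1,g)=1$ together with the $2$-cocycle relation.

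For part (b), let $n=|G|$ and write $H=[\varphi(g_ig_j)]_{i,j}$ for a map $\varphi\colon G\to\{\pm1\}$. The key structural observation is that every row sum is constant: as $g_j$ ranges over $G$ so does $g_ig_j$, so $\sum_j \varphi(g_ig_j)=\sum_{h\in G}\varphi(h)=:S$ independently of $i$. Hence $H\mathbf{1}=S\mathbf{1}$ for the all-ones vector $\mathbf{1}$. Since $H$ is Hadamard we have $HH^\intercal=nI_n$, and for a square matrix this forces $H^\intercal H=nI_n$ as well. Computing the squared norm of $H\mathbf{1}$ in two ways gives $S^2 n=\mathbf{1}^\intercal H^\intercal H\mathbf{1}=n\,\mathbf{1}^\intercal\mathbf{1}=n^2$, so $S^2=n$. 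As $S\in\mathbb{Z}$ is a sum of $\pm 1$'s, the order $n=S^2$ is a perfect square.

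The arguments are largely routine; the only points requiring care are confirming in (a) that the rescaled $\psi'$ genuinely satisfies the cocycle identity (and not merely that it produces the correct matrix entries), and in (b) justifying the passage from $HH^\intercal=nI_n$ to $H^\intercal H=nI_n$ so that the norm computation closes. I do not expect either to be a serious obstacle.
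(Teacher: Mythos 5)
Your proposal is correct and takes essentially the same approach as the paper: part (a) is the identical row/column rescaling by the values $\phi(g_i)$, $\phi(g_j)$, and part (b) is the same constant-row-sum plus orthogonality computation, which the paper phrases with the all-ones matrix $J_n$ (via $H^\intercal H J_n = nJ_n = s^2 J_n$) rather than the all-ones vector, but yields $n=s^2$ in exactly the same way. Your two points of extra care (that $\psi'$ is genuinely a $2$-cocycle, and that $HH^\intercal=nI_n$ implies $H^\intercal H=nI_n$) are both valid and are used implicitly in the paper's proof.
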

\begin{proof}
Note that
$[\psi'(g,h)]_{g,h\in G}$ differs from $[\psi(g,h)\phi(gh)]_{g,h\in G}$ only in the multiplication of rows and columns by scalars $\pm 1$; this proves a).
Every group-developed matrix contains the same number of elements $1$ in each row and column; let $s$ be the row  sum of $H$. If $J_n$ is the all-ones matrix of order, then $HJ_n = H^\intercal J_n= sJ_n$; but we also have $H^{\intercal}HJ_n = nJ_n$, from which it follows that $n = s^{2}$.
\end{proof}

\section{Cocyclic Hadamard matrices over a group of order $4p$}\label{secGroup}
Fix a (pure)  CHM $H$ of order $4p$, $p>3$ a prime, with indexing group $G$ and cocycle $\psi\in Z^2(G,\LaRa)$. In this case, $G$ is isomorphic to  $C_2^2\times C_p$ or $D_{4p}$, see  \cite[p.\ 223]{deLauneyFlannery}; details of the proof are required later, which is why we recall it here. The Sylow and Schur-Zassenhaus theorems show that  $G=K\ltimes N$ where $N\cong C_p$ and $K$ has order $4$. Let $E_\psi=G\ltimes_\psi \LaRa$ be the central extension defined by $\psi$. By Schur-Zassenhaus $E_{\psi}$ has a normal subgroup isomorphic to $N$ of the form $\hat{N}=\{(n,z_n)\mid n\in N\}$,  such that
\[E_\psi=\hat{K}\ltimes \hat{N}\] for $\hat{K}=\{(k,z)\mid k\in K,\; z\in \LaRa\}$; see \cite[Proposition 19.1.1]{deLauneyFlannery}. Note that $\hat{K}\cong K\ltimes_\psi \LaRa$, where, by abuse of notation, $\psi$ is identified with its restriction to $K\times K$. The next result is due to Ito.

\begin{theorem}[\cite{Ito}, Propositions~5~\&~6]\label{Ito}
The Sylow 2-subgroup $\hat{K}$ of $E_{\psi}$ is not cyclic or dihedral.
\end{theorem}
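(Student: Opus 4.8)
The plan is to convert the existence of the cocyclic Hadamard matrix into a statement about a central relative difference set in $E_\psi$, and then to test the isomorphism type of $\hat K$ against the orthogonality that the Hadamard property forces on irreducible representations. Write $e=(1,-1)$ for the central involution, so that $\langle\pm\rangle=\langle e\rangle$. First I would invoke the standard dictionary between cocyclic Hadamard matrices and relative difference sets: a pure cocyclic Hadamard matrix of order $4p$ with cocycle $\psi$ corresponds to a central relative $(4p,2,4p,2p)$-difference set $R\subseteq E_\psi$ relative to $\langle e\rangle$. Identifying a subset with the formal sum of its elements in the integral group ring and writing $R^{(-1)}=\sum_{r\in R}r^{-1}$, this reads
\[
R\,R^{(-1)} = 4p\cdot 1 + 2p\bigl(E_\psi - \langle e\rangle\bigr).
\]
In particular $|R|=4p=\tfrac12|E_\psi|$ and no two elements of $R$ differ by $e$, so $R$ meets every coset of $\langle e\rangle$ exactly once.

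Next I would feed irreducible representations into this identity. If $\rho$ is a nontrivial irreducible representation of $E_\psi$ with $\rho(e)=-I$, then $\sum_{g\in E_\psi}\rho(g)=0$ and $\rho(1)+\rho(e)=0$, so applying $\rho$ gives $\rho(R)\rho(R)^{*}=4p\,I$; thus $\rho(R)/\sqrt{4p}$ is unitary. The whole argument then becomes a case analysis deciding which irreducibles carry the sign $\rho(e)=-I$. Since $\hat N\cong C_p$ is a normal complement to $\hat K$, Clifford theory over $\hat N$ splits these representations into those inflated from $\hat K=E_\psi/\hat N$ and those induced from a nontrivial character of $\hat N$, and I would analyse both families.

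The cyclic type is reached only when $K\cong C_4$, since $C_8/\langle e\rangle\cong C_4$. There $\hat K$ is abelian, every $\rho$ is linear, the four order-$8$ characters satisfy $\rho(e)=-1$, and together with their $\hat N$-twists they force character values and determinants with $|\chi(R)|^{2}=4p$ lying in $\mathbb{Z}[\zeta_8]$, respectively $\mathbb{Z}[\zeta_{8p}]$. The dihedral type is reached only when $K\cong C_2^2$, and here a new feature intervenes: the commutator subgroup of $D_8$ is exactly $\langle e\rangle$, so every linear character is trivial on $e$ and the only irreducible with $\rho(e)=-I$ is the $2$-dimensional representation $\sigma$ (and its $\hat N$-twists). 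Passing to the quotient $\pi\colon E_\psi\to\hat K$ and setting $S=\pi(R)$ gives $S\,S^{(-1)}=2p\cdot 1-2p\,e+2p^{2}\Sigma$ with $\Sigma=\sum_{k\in\hat K}k$; applying $\sigma$ (which kills $\Sigma$ and sends $e$ to $-I$) yields an \emph{integer} matrix $M=\sigma(S)$ with $MM^{\intercal}=4p\,I_2$. The first row of $M$ then writes $4p=2^{2}p$ as a sum of two integer squares, which is impossible whenever $p\equiv 3\bmod 4$; this excludes $\hat K\cong D_8$ outright in that case.

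The main obstacle is the residual number theory, precisely the regime where the norm condition $\rho(R)\rho(R)^{*}=4p\,I$ is rationally satisfiable and one must exploit the finer arithmetic of the cyclotomic rings $\mathbb{Z}[\zeta_8]$, $\mathbb{Z}[\zeta_p]$ and $\mathbb{Z}[\zeta_{8p}]$. Here I would use the $\hat N$-twisted representations simultaneously together with a self-conjugacy (Turyn-type) argument: tracking the factorisation of the ramified prime $2$, and of $p$, in these fields forces a prescribed divisibility of $\chi(R)$ (respectively of $\det\sigma(R)$) that is incompatible with $\chi(R)$ being a sum of $4p$ roots of unity carrying nonnegative multiplicities summing to $4p$. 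Pushing this through for the cyclic characters, and for the twisted $2$-dimensional representation when $p\equiv 1\bmod 4$, completes the exclusion of both $C_8$ and $D_8$; this is exactly the point at which Ito's analysis in Propositions~5 and~6 does the real work.
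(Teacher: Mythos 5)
The paper does not prove this theorem at all: it imports it verbatim from Ito (Propositions 5 and 6 of the cited paper), so your attempt has to stand entirely on its own, and it does not. Your framework is correct and well chosen --- the dictionary between pure CHMs and central relative $(4p,2,4p,2p)$-difference sets $R\subseteq E_\psi$, the identity $\rho(R)\rho(R)^{*}=4p\,I$ for irreducibles with $\rho(e)=-I$, the passage to the quotient $\hat K=E_\psi/\hat N$ --- and your exclusion of $\hat K\cong D_8$ for $p\equiv 3\bmod 4$ via the integer matrix $M$ with $MM^{\intercal}=4pI_2$ is complete and correct. But that is only one of three sub-cases. For $\hat K\cong C_8$ (all $p$) and $\hat K\cong D_8$ with $p\equiv 1\bmod 4$ you give no argument: you gesture at Turyn-type self-conjugacy and cyclotomic factorisation and then explicitly defer to ``Ito's analysis in Propositions 5 and 6,'' which is precisely the statement to be proved --- that is circular. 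Moreover, the sketched tool cannot close the gap as described: for $p\equiv 1\bmod 8$ the prime $p$ splits completely in $\mathbb{Q}(\zeta_8)$ and is not self-conjugate, and the bare norm equation $\alpha\bar\alpha=4p$ \emph{is} solvable in $\mathbb{Z}[\zeta_8]$ (e.g.\ $\alpha=8+2i$ for $p=17$), just as integer matrices with $MM^{\intercal}=4pI_2$ exist for every $p\equiv 1\bmod 4$. So no argument that ignores the finer coefficient structure of $S=\pi(R)$ can succeed in the remaining cases.

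The missing idea is a parity constraint that your own setup already supplies. Since $R$ meets every coset of $\langle e\rangle$ exactly once and $|\hat N|=p$ is odd, the two coefficients of $S=\pi(R)$ on any coset $\{k,k\pi(e)\}$ of $\langle\pi(e)\rangle$ in $\hat K$ sum to $p$; hence each difference $t_k$ is an \emph{odd} integer. For $\hat K\cong C_8=\langle g\rangle$ with $e=g^4$, an order-$8$ character applied to $SS^{(-1)}=2p(1-e)+2p^2\Sigma$ gives $\bigl|\sum_{k=0}^{3}t_k\zeta_8^k\bigr|^2=4p$; separating the rational and $\sqrt{2}$-components yields $t_0^2+t_1^2+t_2^2+t_3^2=4p$ and $t_1(t_0+t_2)=t_3(t_0-t_2)$. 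Since $t_0,t_2$ are odd, exactly one of $t_0\pm t_2$ is $\equiv 2\bmod 4$, so the two sides of the last equation are nonzero integers of different $2$-adic valuations --- a contradiction, for every odd $p$. For $\hat K\cong D_8=\langle r,s\rangle$ with $\pi(e)=r^2$, take the integral realisation $\sigma(r)=\left(\begin{smallmatrix}0&-1\\ 1&0\end{smallmatrix}\right)$, $\sigma(s)=\left(\begin{smallmatrix}1&0\\ 0&-1\end{smallmatrix}\right)$ and coset representatives $1,r,s,rs$ with odd differences $t_1,\dots,t_4$; then
\begin{equation*}
M=\sigma(S)=\begin{pmatrix} t_1+t_3 & t_4-t_2\\ t_2+t_4 & t_1-t_3\end{pmatrix},\qquad MM^{\intercal}=4pI_2,
\end{equation*}
and orthogonality of the rows gives $t_1t_4=-t_2t_3$ while equality of the row norms gives $t_1t_3=t_2t_4$; multiplying these forces $t_3t_4(t_1^2+t_2^2)=0$, impossible for odd $t_j$. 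This disposes of $D_8$ for \emph{every} odd $p$ (subsuming your sum-of-two-squares case) and of $C_8$, entirely within your framework and with no algebraic number theory; it is in essence the counting argument Ito actually uses.
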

Thus   $\hat{K}$ has type $C_2^3$, $C_4\times C_2$, or $Q_8$. If $\hat{K}\cong C_2^3$, then $\hat{K}$ (and so also $E_\psi$) is a split extension of $K\cong C_2^2$ by $\LaRa$, hence $\psi$ is a coboundary. In this case, $H$ is necessarily group-developed, which is not possible since $|G|=4p$ is not a square, see Lemma \ref{Wallis}. This proves  the first claim of the following lemma.

\begin{lemma}\label{classification}
The Sylow 2-subgroup $\hat{K}$ of $E_{\psi}$ is $Q_8$ or $C_4\times C_2$, and it acts on $\hat{N}$ trivially or by inversion. The indexing group of a CHM of order $4p$ with $p>3$ a prime is isomorphic to $C_2^2\times C_p$ or $D_{4p}$.
\end{lemma}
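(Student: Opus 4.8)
The first assertion is already established in the discussion preceding the lemma, so the plan is to determine the conjugation action of $\hat K$ on $\hat N$ and then to identify $G$. The first thing I would record is that, because $\LaRa$ is central in $E_\psi$ and sits inside $\hat K$, it acts trivially on $\hat N$; hence the action of $\hat K$ on $\hat N$ factors through $\hat K/\LaRa\cong K$ and agrees with the action of $K$ on $N$ encoded in $G=K\ltimes N$. Thus it suffices to understand $K$ and its action on $N$.

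The crux --- and the step I expect to be the main obstacle --- is to show that $K\cong C_2^2$ and not $C_4$. Here I would refine the argument that already excluded $\hat K\cong C_2^3$: if the central extension $1\to\LaRa\to\hat K\to K\to 1$ were split, then combining a complement of $\LaRa$ in $\hat K$ with $\hat N$ would split $E_\psi=\hat K\ltimes\hat N$ over $\LaRa$, forcing $\psi$ to be a coboundary; by Lemma \ref{Cohomologous} the matrix $H$ would then be group-developed, contradicting Lemma \ref{Wallis} as $4p$ is not a square. Thus the extension is non-split. For $\hat K\cong Q_8$ the only involution generates the centre, so $\LaRa=Z(Q_8)$ and $K\cong Q_8/Z(Q_8)\cong C_2^2$. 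For $\hat K\cong C_4\times C_2$ I would check the three subgroups of order $2$ directly: the two whose quotient is $C_4$ are complemented, so non-splitness leaves only the subgroup with quotient $C_2^2$, and again $K\cong C_2^2$.

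Once $K\cong C_2^2$ is in place the remainder is routine. The action defines a homomorphism $K\to\Aut(N)\cong C_{p-1}$; its image is simultaneously a quotient of $C_2^2$ and a subgroup of a cyclic group, hence cyclic of order $1$ or $2$. Since the unique involution of $\Aut(C_p)$ is inversion, $\hat K$ acts on $\hat N$ trivially or by inversion, which is the action claim. Finally I would reconstruct $G$: a trivial action gives $G\cong C_2^2\times C_p$; an inversion action has kernel a central $C_2$, while a complementary involution inverts $N$, so $G\cong C_2\times(C_2\ltimes C_p)\cong C_2\times D_{2p}$, and since $p$ is odd this is isomorphic to $D_{4p}$ (in $D_{4p}=\langle r,s\rangle$ the central involution $r^p$ and the dihedral subgroup $\langle r^2,s\rangle\cong D_{2p}$ give the decomposition). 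This yields the two stated possibilities for the indexing group.
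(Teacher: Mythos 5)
Your proof is correct, and it reaches the same conclusions from the same two pillars---Theorem \ref{Ito} and the square-order obstruction of Lemma \ref{Wallis}---but it is organized in the opposite direction from the paper's argument. The paper works bottom-up from the quotient $K$: it lists the central extensions of each candidate $K$ by $\LaRa$ (for $K\cong C_4$ the groups $C_8$ and $C_4\times C_2$, discarded by Theorem \ref{Ito} and the splitting argument; for $K\cong C_2^2$ the eight presentations $L_{r,s,t}$, whose isomorphism types are computed and then filtered by the same two tools). You instead work top-down inside the two admissible Sylow $2$-subgroups, taking the first claim as already proved before the lemma (which the paper itself does): you show once that $\hat K$ cannot split over $\LaRa$, then locate $\LaRa$ inside $Q_8$ (its centre, the unique order-$2$ subgroup) and inside $C_4\times C_2$ (the unique non-complemented order-$2$ subgroup), forcing $K\cong\hat K/\LaRa\cong C_2^2$ in both cases; your complementation claims check out, since every order-$4$ subgroup of $C_4\times C_2$ contains the square of a generator of the $C_4$ factor. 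What the paper's enumeration buys is reuse: the parameters $(r,s,t)$ and the cocycles $\psi_{r,s,t}$ fixed in Remark \ref{remcoc} are exactly what indexes the block structures in Theorem \ref{thmMain}, so its proof doubles as setup for the rest of the paper. What your route buys is economy and completeness for the lemma itself: it avoids the presentation-by-presentation isomorphism check, and it spells out two steps the paper leaves implicit---that the image of $K\to\Aut(N)$ is a cyclic quotient of $C_2^2$, hence of order at most $2$ with inversion the unique involution of $\Aut(C_p)$, and the identification $G\cong C_2\times D_{2p}\cong D_{4p}$ (valid since $p$ is odd) in the non-trivial case.
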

\begin{proof}
The  central extensions of $K=C_4$ by $\LaRa$, namely $C_{4} \times C_{2}$ and $C_{8}$, are discarded by Theorem \ref{Ito} and Lemma \ref{Wallis}. The central extensions of $K=C_2^2$ by $\LaRa$ can be written as $\hat{K}=K\ltimes_\psi \LaRa=L_{r,s,t}$ where the generators of $L_{r,s,t}=\langle a,b,z\rangle$ satisfy
\begin{equation*}\label{DefLIJK}
a^2=z^r,\; b^2=z^s,\; b^a=bz^t,\; z^a=z^b=z;
\end{equation*}
a direct computation shows that $L_{0,0,1}, L_{0,1,1}, L_{1,0,1}\cong D_8$ and $L_{1,0,0},L_{0,1,0},L_{1,1,0}\cong C_4\times C_2$ and $L_{0,0,0}\cong C^3_2$, and $L_{1,1,1}\cong Q_8$. By Theorem \ref{Ito} and Lemma \ref{Wallis},  the possibilities for $\hat{K}$ are  $L_{1,1,1}$, $L_{1,0,0}$, $L_{0,1,0}$, $L_{1,1,0}$; in particular, $K\cong C_2^2$ and so the elements of $K$ act trivially or by inversion on $N$.
\end{proof}

\begin{remark}\label{remcoc}Mapping a 2-cocycle in $Z^2(K,\LaRa)$ to the triple $(r,s,t)\in \mathbb{Z}_2^3$, as described in the proof of Lemma \ref{classification}, is a group homomorphism with nontrivial kernel $\langle \kappa\rangle\cong C_2$. Since $|Z^2(K,\LaRa)|=16$, this shows that for each triple $(r,s,t)$ there exist exactly two 2-cocycles in $Z^2(K,\LaRa)$ that map onto $(r,s,t)$. For  each $(r,s,t)\in\{(1,1,1),(1,0,0),(0,1,0),(1,1,0)\}$ we fix a 2-cocycle $\psi_{r,s,t}\in Z^2(G,\LaRa)$ such that $E_{\psi_{r,s,t}}$ decomposes as $\hat{K}\ltimes \hat{N}$ with $\hat{K}=K\ltimes_{\psi_{r,s,t}}\LaRa=L_{r,s,t}$ as in the proof of Lemma \ref{classification}; specifically, if we order the elements of $K$ by $1,a,b,ab$, then we can assume that
{\arraycolsep=1.5pt\def\arraystretch{0.8}\[[\psi_{r,s,t}(u,v)]_{u,v\in K}=\left[\begin{array}{rrrr}
1 & 1 & 1 & 1\\
1 & (-1)^r & 1 & (-1)^r\\
1 & (-1)^t & (-1)^s & (-1)^{s+t}\\
1 & (-1)^{r+t} & (-1)^s & (-1)^{r+s+t}
  \end{array}\right]
\quad\text{and}\quad [\kappa(u,v)]_{u,v\in K}=\left[\begin{array}{rrrr}
1 & 1 & 1 & 1\\
1 & 1 & -1 & -1\\
1 & -1 & 1 & -1\\
1 & -1 & -1 & 1
  \end{array}\right].\]
Now let $\psi\in Z^2(G,\LaRa)$ such that the restriction to $K\times K$ gives parameters $(r,s,t)$, that is, $\psi|_{K\times K}$ is either $\rho$ or $\rho\cdot\kappa$ where $\rho=\psi_{r,s,t}|_{K\times K}$. We claim that $\delta=\psi\psi_{r,s,t}^{-1}$ lies in $B^2(G,\LaRa)$. For this recall from above that $E_\delta$ can be written as $E_\delta=\hat K\ltimes\hat N$ with $\hat{N}\cong C_p$ and $\hat K\cong K\ltimes_{\delta} \LaRa$. Since $\delta|_{K\times K}$ lies in $B^2(K,\LaRa)$, we know that $\hat K$ splits, hence $E_\delta$ splits; this proves that $\delta\in B^2(G,\LaRa)$, as claimed.}
  \end{remark}

\section{Block structure}\label{secBLOCK}
We now study the block structure of CHMs of order $4p$ with $p>3$ a prime, and prove our main results Theorems \ref{thmMain} and \ref{propequivalences}.
For a CHM of order $4p$ there are sixteen possible  block structures, corresponding to the groups $L_{1,0,0}$, $L_{0,1,0}$, $L_{1,1,0}$ and $L_{1,1,1}$,
in combination with four possible actions by the Sylow 2-subgroup of the indexing group on the blocks of the array;
IMs and WMs are included in that list.

In the following let $H$ be a matrix over $\{\pm1\}$ of order $n$. Let $G$ be a group of size $n$ and fix an ordering of the elements of $G$ to index the rows and columns of $H$. The Kronecker product $A\otimes B$ of two matrices is the matrix where each entry $a_{i,j}$ of $A$ is replaced by the matrix $a_{i,j}B$. The \emph{expanded matrix} of $H$ is\[\ext_H=\left(\begin{array}{rr}1 & -1\\ -1 & 1 \end{array}\right)\otimes H.\]

\begin{theorem}[\cite{CatRod}]\label{Equivalence1}
With the previous notation, $H$ is cocyclic with $\psi\in Z^2(G,\LaRa)$  and indexing group $G$ if and only if $\ext_H$ is $E_\psi$-developed.
\end{theorem}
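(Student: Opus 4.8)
The plan is to build an explicit dictionary between the cocyclic data $(\psi,\phi)$ of $H$ and a development map on $E_\psi$, organised around the distinguished central involution $z=(1,-1)\in E_\psi$. First I would fix the ordering of $E_\psi=\{(g_i,a)\colon 0\le i<n,\ a\in\LaRa\}$ induced by the given ordering $g_0=1,\dots,g_{n-1}$ of $G$, listing $(g_0,1),\dots,(g_{n-1},1)$ and then $(g_0,-1),\dots,(g_{n-1},-1)$, so that the four $n\times n$ blocks of an $E_\psi$-developed matrix line up with the four sign blocks of $\ext_H=\bigl(\begin{smallmatrix}1&-1\\-1&1\end{smallmatrix}\bigr)\otimes H$. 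Using $\psi(g,1)=\psi(1,g)=1$ one records that $z$ is central, that $z^2=1$, and that $(g_i,a)=(g_i,1)z^{(1-a)/2}$, so $z$ merely toggles the sign-coordinate.

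For the forward implication, suppose $H=[\psi(g_i,g_j)\phi(g_ig_j)]$ exactly. I would define $\Phi\colon E_\psi\to\LaRa$ by $\Phi(g,a)=a\,\phi(g)$ and verify, from the extension law $(g,a)(h,b)=(gh,ab\,\psi(g,h))$, that $\Phi\bigl((g_i,a)(g_j,b)\bigr)=ab\,\psi(g_i,g_j)\phi(g_ig_j)=ab\,H_{i,j}$. Running over $(a,b)\in\{\pm1\}^2$ reproduces the blocks $H,-H,-H,H$, i.e.\ $[\Phi(e_re_c)]=\ext_H$, so $\ext_H$ is $E_\psi$-developed.

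For the reverse implication, suppose $\ext_H=[\Phi(e_re_c)]$ for some $\Phi\colon E_\psi\to\LaRa$ in this ordering. The key step is to extract the negation property $\Phi(wz)=-\Phi(w)$ purely from the block pattern. Comparing the top-left block, whose $(i,j)$ entry is $\Phi\bigl((g_i,1)(g_j,1)\bigr)=\Phi(g_ig_j,\psi(g_i,g_j))=H_{i,j}$, with the top-right block, whose entry is $\Phi\bigl((g_i,1)(g_j,-1)\bigr)=\Phi(g_ig_j,-\psi(g_i,g_j))=-H_{i,j}$, yields $\Phi(g_ig_j,-\psi(g_i,g_j))=-\Phi(g_ig_j,\psi(g_i,g_j))$; specialising $j=0$ (so $g_j=1$ and $\psi(g_i,1)=1$) gives $\Phi(g_i,-1)=-\Phi(g_i,1)$, hence $\Phi(wz)=-\Phi(w)$ throughout. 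I would then set $\phi(g):=\Phi(g,1)$ (so that $\Phi(g,a)=a\,\phi(g)$, and $\phi(1)=\Phi(1,1)=H_{0,0}=1$ after normalisation) and read the top-left block once more to obtain $H_{i,j}=\psi(g_i,g_j)\phi(g_ig_j)$, exactly the cocyclic form with cocycle $\psi$.

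The main obstacle I anticipate is the bookkeeping around the ordering and around the $\equiv$ in the definition of ``cocyclic''. The computations above are transparent only because $E_\psi$ carries the natural ordering aligned with the $\bigl(\begin{smallmatrix}1&-1\\-1&1\end{smallmatrix}\bigr)\otimes H$ pattern and because $z$ has been pinned down as the sign-toggling, negation-inducing central element. The genuine work is therefore (i) to argue that the $E_\psi$-developed property is insensitive to the admissible reorderings of $E_\psi$ (via the regular action of $E_\psi$ on itself, which permutes rows and columns simultaneously and so preserves development), and (ii) to reconcile equivalence of $H$ with literal development of $\ext_H$, checking that the row/column permutations and sign changes defining $\equiv$ on $H$ correspond to development-preserving operations on $\ext_H$. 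Once the distinguished involution $z$ is fixed as above, both implications collapse to the direct verifications sketched here.
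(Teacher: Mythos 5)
Your proposal is correct and takes essentially the same route as the paper's source: the paper itself only cites \cite{CatRod} for this theorem, and the two facts it recalls from that proof --- that a pure cocyclic matrix $[\psi(g,h)]$ has expanded matrix developed by the sign-projection map $(g,a)\mapsto a$, and that conversely a development map $\phi$ of $\ext_H$ yields $H=[\psi(g,h)\phi'(gh)]$ with $\phi'(g)=\phi(g,1)$ --- are exactly your two verifications, with your $\Phi(g,a)=a\,\phi(g)$ treating the non-pure case directly where the paper first reduces to the pure case via Lemma \ref{Wallis}\,a). The ordering/equivalence bookkeeping you flag at the end is resolved in the paper just as you suggest: sign changes and permutations of $H$ correspond to row/column permutations of $\ext_H$, so development of $\ext_H$ is unaffected.
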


If $H$ is a CHM with indexing group $G$, say $H= [\psi(g,h)\tau(gh)]_{g,h\in G}$, then $H\equiv H'= [\psi'(g,h)]_{g,h\in G}$, where ${\psi'}=\psi\cdot \delta$ for the $2$-coboundary $\delta(g,h)=\tau(gh)\tau(g)^{-1}\tau(h)^{-1}$, see Lemma \ref{Wallis}. The matrices $\ext_{H}$ and $\ext_{H'}$ differ only in the ordering of rows and columns. Since $E_\psi\cong E_{{\psi'}}$, we have that $\ext_H$ is $E_\psi$-developed if and only if is $E_{\psi'}$-developed.

We recall the following facts from the proof of Theorem \ref{Equivalence1} given in \cite{CatRod}. First, if $H= \left[\psi(g,h)\right]_{g,h\in G}$ is pure cocyclic, then we can assume that $\ext_H=\left[\phi((gh,ab\psi(g,h)))\right]_{(g,a),(h,b)\in E_\psi}$ where $\phi\colon E_\psi\to \LaRa$ is given by $\phi(g,a)\mapsto a$. Second, if $\ext_H$ is $E_\psi$-developed, say with map $\phi\colon E_\psi\to\LaRa$, then
\begin{eqnarray}\label{ExtMatrix2}H = \left[\psi(g,h)\phi'(gh) \right]_{g,h\in G}
\end{eqnarray}
for some function $\phi'(g)=\phi(g,1)$ for $g\in G$. Up to Hadamard equivalence, Lemma \ref{Wallis} shows that $H \equiv \left[ \psi'(g,h)\right]$ where $\psi'$ is cohomologous to $\psi$.

\begin{remark}
We illustrate the general theory: let $H$ be a CHM of order $4p$ with indexing group $C_{2}^{2} \times C_{p}$ and cocycle corresponding to  $E_\psi=Q_8\times C_p$. Theorem~\ref{Equivalence1} shows that $\mathcal{E}_{H} = \left(\mu(gh) \right)_{g,h \in E_\psi}$ for some $\mu\colon E_\psi  \rightarrow \{\pm 1\}$.
Fix a total order on $E_\psi$ by using the ordering $1, i, j, k, -1, -i, -j, -k$ on $Q_{8}$, and setting
$(\alpha, n_{1}) < (\beta, n_{2})$ if $\alpha < \beta$ in the ordering on $Q_{8}$ or $\alpha = \beta$ and $n_{1} < n_{2}$ (as integers).
Up to permutation equivalence, the rows and columns of $\mathcal{E}_{H}$ are labelled by the elements of $E_\psi$ in this order. Now the cosets of $C_{p}$ in $E_\psi$ label back-circulant block matrices, and these block matrices are ordered according to the Cayley table of $Q_{8}$. Furthermore, the proof of Theorem~\ref{Equivalence1} shows that $\mu( -\alpha, n) = -\mu(\alpha, n)$ for all $(\alpha, n) \in E_\psi$; thus the $4n \times 4n$ matrix in the upper left of
$\mathcal{E}_{H}$ has the form \figa\ in Figure \ref{figWM}.
\end{remark}

\subsection{Block structure}\label{secBlock}
We now prove our main theorem.

\begin{theorem}\label{thmMain}
Every cocyclic Hadamard matrix of order $4p$ with $p>3$ a prime is Hadamard equivalent to a Hadamard matrix of the form
\begin{equation}\label{blockstructure}
\HM(W,X,Y,Z)^{a,b}_{r,s,t} = \left[\begin{array}{rrrr}
W & X^a & Y^b & Z^{ab}\\
X & (-1)^rW^a & Z^b & (-1)^rY^{ab}\\
Y & (-1)^tZ^a & (-1)^sW^b & (-1)^{s+t}X^{ab}\\
Z & (-1)^{r+t}Y^a & (-1)^sX^b & (-1)^{r+s+t}W^{ab}
\end{array}\right]
\end{equation}
where $(r,s,t)\in \{(1,0,0),(0,1,0),(1,1,0),(1,1,1)\}$, each $M\in\{W,X,Y,Z\}$ is a back-circulant $p\times p$ block, and $a,b\in\{\Tri,\Inv\}$ with $\Inv\Inv=\Tri=\Tri\Tri$ and $\Inv=\Inv\Tri=\Tri\Inv$ such that $M^\Tri=M$ and $M^\Inv$ is a circulant matrix with the same first row as $M$.
\end{theorem}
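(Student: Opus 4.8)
The plan is to reduce $H$ to an explicit pure cocyclic matrix built from the \emph{inflated} cocycle $\psi_{r,s,t}$, and then read off the array by grouping rows and columns according to the cosets of $N$ in $G$. The group-theoretic input is Lemma \ref{classification}: the indexing group is $G = K\ltimes N$ with $K\cong C_2^2=\{1,a,b,ab\}$ and $N\cong C_p$, and each element of $K$ acts on $N$ either trivially or by inversion (this covers both $C_2^2\times C_p$ and $D_{4p}$). Writing the multiplication as $(k_1,n_1)(k_2,n_2)=(k_1k_2,\,n_1^{k_2}n_2)$, the key elementary fact is that $n_1^{k_2}=n_1$ when $k_2$ acts trivially and $n_1^{k_2}=n_1^{-1}$ when $k_2$ acts by inversion.

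Next I would normalise the cocycle. Combining Remark \ref{remcoc} with Lemma \ref{Wallis}(a), the cocycle of $H$ is cohomologous to one of the fixed cocycles $\psi_{r,s,t}$ with $(r,s,t)\in\{(1,1,1),(1,0,0),(0,1,0),(1,1,0)\}$, and I would take $\psi_{r,s,t}$ to be the inflation to $G$ of the cocycle on $K$ displayed in Remark \ref{remcoc}, so that the value $\psi_{r,s,t}((k_1,n_1),(k_2,n_2))$ depends only on the pair $(k_1,k_2)$. Absorbing both the cohomology and the original developing map into a single map via Lemma \ref{Wallis}(a) and \eqref{ExtMatrix2}, I obtain $H \equiv \left[\psi_{r,s,t}(g,h)\,\phi'(gh)\right]_{g,h\in G}$ for some map $\phi'\colon G\to\LaRa$.

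Then I would extract the blocks. Ordering the elements of $G$ by the cosets $N,aN,bN,abN$ produces a $4\times 4$ array of $p\times p$ blocks indexed by $(k_1,k_2)\in K\times K$. Since $\psi_{r,s,t}$ is inflated, it contributes the constant scalar $\psi_{r,s,t}(k_1,k_2)$ to the block in position $(k_1,k_2)$, and these scalars are exactly the entries of the sign matrix in Remark \ref{remcoc}, i.e.\ the signs appearing in \eqref{blockstructure}. The surviving factor in entry $(n_1,n_2)$ of that block is $\phi'(k_1k_2,\,n_1^{k_2}n_2)$. Setting $W,X,Y,Z$ to be the back-circulant matrices with first rows $\phi'(1,\cdot),\phi'(a,\cdot),\phi'(b,\cdot),\phi'(ab,\cdot)$, the block is built from the generator indexed by $k_1k_2$, so $W$ occurs exactly when $k_1k_2=1$, $X$ when $k_1k_2=a$, and so on; moreover it is back-circulant (the case $\Tri$) when $k_2$ acts trivially on $N$ and circulant with the same first row (the case $\Inv$) when $k_2$ acts by inversion. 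Recording the action of $k_2$ as the exponent on the block — $\Tri$ for the first column and $a,b,ab$ for the others, with the relations $\Inv\Inv=\Tri$ etc.\ forced because the action is a homomorphism $K\to\{\Tri,\Inv\}$ — produces precisely $\HM(W,X,Y,Z)^{a,b}_{r,s,t}$.

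The main obstacle is bookkeeping rather than conceptual: once the semidirect-product convention is fixed, one must verify block by block that the three data — the constant sign from the inflated cocycle, the generator $\phi'(k_1k_2,\cdot)$ selecting which of $W,X,Y,Z$ appears, and the action of $k_2$ selecting back-circulant versus circulant — assemble into each of the sixteen entries of \eqref{blockstructure}. I would display the four first-row entries together with one or two representative lower entries (say positions $(a,a)$ and $(ab,a)$) to exhibit the pattern, and remark that the Hadamard property is inherited automatically because only Hadamard equivalences are applied throughout.
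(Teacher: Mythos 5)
Your proposal is correct and follows essentially the same route as the paper's proof: reduce to $G=K\ltimes N$ with $K\cong C_2^2$ acting trivially or by inversion (Lemma \ref{classification}), use Remark \ref{remcoc} to identify the cocycle with $\psi_{r,s,t}$ up to a coboundary for one of the four admissible triples $(r,s,t)$, and then read off the signs, the block generators indexed by $k_1k_2$, and the back-circulant versus circulant structure from the coset decomposition of $G$ over $N$. The only organizational difference is that you absorb the coboundary $\delta\in B^2(G,\LaRa)$ into the developing map at the outset (valid, by the row/column-scaling argument of Lemma \ref{Wallis}(a)), which lets you bypass the paper's passage through the expanded matrix $\ext_H$ and its $E_\psi$-development (Theorem \ref{Equivalence1}), as well as the paper's end-stage case distinction in which $\delta|_{K\times K}=\kappa$ is removed by altering $\phi'$ on the $ab$-coset and negating the last $p$ rows and columns.
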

\begin{proof}
  Let $H$ be a CHM with cocycle $\psi\in Z^2(G,\LaRa)$, where $G=K\ltimes N$ with $K\cong C_2^2$ and $N\cong C_p$. It follows from Theorem \ref{Equivalence1} that $\ext_H$ is $E_\psi$-developed. Looking at the restriction of $\psi$ to $K\times K$, it follows from Remark \ref{remcoc} that there exist  $(r,s,t)\in\{(1,1,1),(1,1,0),(1,0,0),(0,1,0)\}$ and  $\delta\in B^2(G,\LaRa)$ such that $\psi=\psi_{r,s,t}\cdot \delta$  defines a group extension equivalent to $E_{\psi_{r,s,t}}$; moreover, the restriction $\delta_{K\times K}$ is either trivial or equals $\kappa\in B^2(K,\LaRa)$ as defined in Remark \ref{remcoc}. Note that, by definition of $\psi_{r,s,t}$, the sign  pattern visible in \eqref{blockstructure} equals $[\psi_{r,s,t}(u,v)]_{u,v}\in K$, where the elements of $K$ are ordered as $1,a,b,ab$.

    As above, write $E_{\psi}= \hat{K}\ltimes \hat{N}$ where $\hat{N}=\{(n,z_n)\mid n\in N\}\cong N$ and $\hat{K}= \{(k,z)\mid k\in K,\; z\in \LaRa\}$. Let $\phi$ be the map of the  $E_{\psi}$-development of $\ext_H$. Since $T=\{(kn,z_{n})\mid k\in K,\;  n\in N\}$ is a  transversal of $\LaRa$ in $E_{\psi}$, we have $H\equiv [\phi(u v)]_{u,v\in T}$. The discussion around \eqref{ExtMatrix2} in Theorem \ref{Equivalence1} now proves that
\small    \arraycolsep=2pt\def\arraystretch{0.4}
  \begin{eqnarray*}
H&\equiv&    \left[\psi(k,h)\phi\left( khn^{h}m, z_{n^hm}\right)\right]_{(kn,z_n),(hm,z_m)\in T.}
\end{eqnarray*}
For $k\in K=\{1,a,b,ab\}$ and $n\in N$ we define $\phi'(kn)=\phi(kn,z_{n})$, where $(kn,z_{n})\in E_{\psi}$. Ordering the elements of $T$ as cosets of $\hat{N}$ allows us to exhibit a $4\times 4$-block structure
{\small    \arraycolsep=2pt\def\arraystretch{0.4}
  \begin{eqnarray}\label{strucmat}
H&\equiv&M_{\phi',\psi}=\left[\begin{matrix}
\left[\phi'\left( nm\right)\right] &
\left[\phi'\left( an^{a}m\right)\right] &
\left[\phi'\left( bn^{b}m\right)\right] &
\left[\phi'\left( abn^{ab}m\right)\right]\\ \\
\left[\phi'\left( anm\right)\right] &
\left[\psi(a,a)\phi'\left( n^{a}m\right)\right]&
\left[\psi(a,b)\phi'\left( abn^{b}m\right)\right]&
\left[\psi(a,ab)\phi'\left( bn^{ab}m\right)\right]\\ \\
\left[\phi'\left( b,nm\right)\right] &
\left[\psi(b,a)\phi'\left( abn^{a}m\right)\right]&
\left[\psi(b,b)\phi'\left( n^{b}m\right)\right]&
\left[\psi(b,ab)\phi'\left( an^{ab}m\right)\right]\\ \\
\left[\phi'\left( abnm\right)\right] &
\left[\psi(ab,a)\phi'\left( bn^{a}m\right)\right]&
\left[\psi(ab,b)\phi'\left( an^{b}m\right)\right]&
\left[\psi(ab,ab)\phi'\left( 1n^{ab}m\right)\right]\\
\end{matrix}\right]
\end{eqnarray}}where each of the blocks is indexed by $n,m\in N$.

We first look at the signs of those blocks, defined by $\psi(u,v)$ with $u,v\in K$. As mentioned above, the restriction of   $\psi=\psi_{r,s,t}\cdot\delta$ to $K\times K$  is uniquely determined by $(r,s,t)$, modulo $\kappa$. By Remark \ref{remcoc}, if $\delta_{K\times K}$ is trivial, then those signs are exactly those visible in \eqref{blockstructure}; if $\delta_{K\times K}=\kappa$, then $M_{\phi',\psi}\equiv M_{\phi'',\psi_{r,s,t}}$ where the function $\phi''$  equals $\phi'$ except on the coset defined by $ab$, where it takes the opposite value to $\phi'$. (This is simply a relabelling of blocks, the matrix entries are unchanged; the final equivalence follows by negating the last $p$ rows and $p$ columns.) Thus, up to equivalence, we can assume that $\delta|_{K\times K}=1$ is trivial, hence the signs determined by $\psi$ equal those in \eqref{blockstructure}.

  Now we look at the structure of the blocks.  Since $N$ is cyclic and $v \in K$ acts trivially or by inversion on $N$, it follows that each block $[\phi'(un^vm)]_{n,m\in N}$ is circulant or back-circulant for any $u \in K$. Thus, defining $W=\left[\phi'\left( nm\right)\right]$, $X=\left[\phi'\left( an^{a}m\right)\right]$, $Y=\left[\phi'\left( bn^{b}m\right)\right]$, and $Z=\left[\phi'\left( abn^{ab}m\right)\right]$, we have proved that $H\equiv \HM(W,X,Y,Z)^{a,b}_{r,s,t}$ is as defined in the  theorem.
\end{proof}

\subsection{Equivalence}\label{secEquivalence}We  discuss when matrices of the form \eqref{blockstructure} are equivalent, and prove Theorem \ref{propequivalences}.

\begin{proposition}\label{propequiv}
Let $H=\HM(W,X,Y,Z)^{a,b}_{r,s,t}$ be a Hadamard matrix.
\begin{ithm}

\item If $\alpha,\beta,\gamma,\delta\in\{\pm 1\}$ with $\alpha\beta\gamma\delta=1$, then  $H$ is equivalent to each of
\begin{eqnarray}
\label{eq1}&&\HM(Y,Z, W,X)^{a,b}_{r,s,t},\;  \HM(X,W,Z,Y)^{a,b}_{r,s,t},\; \HM(Z,Y,X,W)^{a,b}_{r,s,t},\; \HM(\alpha W,\beta X, \gamma Y ,\delta Z)^{a,b}_{r,s,t}.
\end{eqnarray}
\item Let $(r,s,t)=(1,1,1)$. If  $a,b\in\{\Tri,\Inv\}$, then
\begin{align*}
  &\HM(W,X,Y,Z)^{{ a},{ b}}_{1,1,1}\equiv\HM(W,Y,Z,X)^{{ b},{ ab}}_{1,1,1}\equiv\HM(W,Z,X,Y)^{{ ab},{ a}}_{1,1,1}\\
   &\HM(W,X,Y,Z)^{{ a},{ b}}_{1,1,1}\equiv\HM(-W,Y,X,Z)^{{ b},{ a}}_{1,1,1}\equiv\HM(-W,Z,Y,X)^{{ ab},{ b}}_{1,1,1}.
\end{align*}
If $(a,b)=(\Tri,\Tri)$, then $H$ is equivalent to a WM; otherwise $M^\intercal$ is equivalent to an IM.
\item If $(r,s,t)\in \{(1,0,0),(0,1,0),(1,1,0)\}$, then  $p\equiv 1 \bmod 4$ and for all $a,b\in\{\tau,\iota\}$
\[ \HM(W,X,Y,Z)^{{ a},{ b}}_{0,1,0}\equiv\HM(W,Y,X,Z)^{{ b},{ a}}_{1,0,0}\equiv\HM(W,-Y,-Z,-X)^{{ b},{ ab}}_{1,1,0}.\]
\item If $(r,s,t)=(1,1,0)$, then  $p\equiv 1 \bmod 4$  and $\HM(W,X,Y,Z)^{{ \Tri},{ \Inv}}_{1,1,0}\equiv\HM(W,Y,X,Z)^{{ \Inv},{ \Tri}}_{1,1,0}$.
\end{ithm}
\end{proposition}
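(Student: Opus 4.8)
The plan is to prove the equivalence by exhibiting an explicit permutation of the $4p$ rows and columns that carries one matrix onto the other, with no sign changes required at all. First I would record that the hypothesis $(r,s,t)=(1,1,0)$ already forces $p\equiv 1\bmod 4$ by part c), so that clause of the statement needs no separate argument. Then I would substitute $(r,s,t)=(1,1,0)$ together with $(a,b)=(\Tri,\Inv)$ (so that $ab=\Inv$) into \eqref{blockstructure} to write the left-hand matrix $A=\HM(W,X,Y,Z)^{\Tri,\Inv}_{1,1,0}$ explicitly as a $4\times4$ array of $p\times p$ blocks, and likewise substitute $(a,b)=(\Inv,\Tri)$ (again $ab=\Inv$) with block letters $W,Y,X,Z$ to write out $B=\HM(W,Y,X,Z)^{\Inv,\Tri}_{1,1,0}$.

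The key step is to apply the permutation $\Pi$ which, viewing the index set as four consecutive blocks of $p$ coordinates, interchanges the second and third blocks while preserving the internal order within each block, and to apply $\Pi$ simultaneously to rows and columns, that is, to pass from $A$ to $\Pi A\Pi$. Interchanging whole blocks is a genuine row (respectively column) permutation, so $A\equiv\Pi A\Pi$ is a Hadamard equivalence; crucially it moves each $p\times p$ block rigidly, leaving its internal back-circulant or circulant structure untouched. I would then verify $\Pi A\Pi=B$ by checking that the three pieces of data attached to the sixteen block positions are compatible under the transposition $(2\,3)$: the sign pattern, the block-letter pattern, and the superscript ($\Tri$ or $\Inv$) pattern.

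The point that makes this work, and the only place where care is needed, is that all three patterns are simultaneously compatible with $(2\,3)$. The sign matrix attached to $(r,s,t)=(1,1,0)$ is invariant under interchanging its second and third rows and columns, so no corrective $\pm1$ scalings of rows or columns are needed; the block-letter arrangement is carried by the swap exactly onto the $W,Y,X,Z$ arrangement of $B$; and since the superscript in each column of \eqref{blockstructure} depends only on the column (column $2\mapsto a$, column $3\mapsto b$, column $4\mapsto ab$), swapping columns $2$ and $3$ interchanges $a$ and $b$ while fixing $ab$, which is precisely the passage from $(a,b)=(\Tri,\Inv)$ to $(a,b)=(\Inv,\Tri)$. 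Thus, unlike the other parts of the proposition, no reversal permutation is needed here to toggle $\Tri$ and $\Inv$: the entire equivalence is realised by the single block transposition $\Pi$, and the main (modest) obstacle is simply the bookkeeping that confirms the three block-level patterns align.
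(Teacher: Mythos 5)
Your proposal addresses only part d) of the proposition: parts a)--c) are nowhere treated, and your handling of the clause $p\equiv 1\bmod 4$ explicitly defers to part c). If the entire four-part proposition is the target, that is a substantial omission: part c) is where the number-theoretic content lives (the counting argument from de Launey--Flannery plus Fermat's two-square theorem forcing $p\equiv 1\bmod 4$), and parts a) and b) contain the equivalences (block permutations, sign flips with $\alpha\beta\gamma\delta=1$, and the reduction to Williamson type and transposed Ito matrices via the reversal matrices $P$ and $Q$) that the rest of the paper actually relies on. None of that follows from, or is replaced by, your block-transposition argument.

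For part d) itself, your proof is correct and is essentially the paper's own: the paper conjugates by $T=T'\otimes I_p$ where $T'$ is the permutation matrix of the transposition $(2,3)$, which is exactly your $\Pi$, and the verification is the same bookkeeping you describe --- the sign pattern attached to $(r,s,t)=(1,1,0)$ is invariant under simultaneously swapping the second and third block rows and columns, the block letters are carried onto $W,Y,X,Z$, and the column superscripts $a,b,ab$ are permuted to $b,a,ab$, i.e.\ $(\Tri,\Inv)\mapsto(\Inv,\Tri)$, with no sign corrections needed. Two minor remarks in your favour: since $\Pi$ is symmetric, $\Pi A\Pi=\Pi A\Pi^\intercal$, so your formulation agrees with the paper's $TAT^\intercal$; and the paper's definition $T=T'\otimes I_4$ is evidently a typo for $T'\otimes I_p$ (the blocks are $p\times p$), so your $\Pi$ is the correct matrix.
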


\begin{proof}
Denote by $\diag(A,B,C,D)$ the block diagonal matrix with diagonal  $A,B,C,D$.  Let $P$ be the $p\times p$ matrix with 1s on its back-diagonal and 0s elsewhere, and let $Q$ be the $p\times p$ permutation matrix (acting on rows) of the $p$-cycle $(1,2,\ldots,p)$. Note that if $U$ is a back-circulant $p\times p$ matrix with first row $u$, then $PUQ^\intercal$ is a circulant matrix with first row $u$; moreover, if $V$ is a circulant $p\times p$ matrix, then $V=QV Q^\intercal$. All permutation matrices defined below are $4\times 4$ and act on rows.

\begin{iprf}

\item Recall that $H=\HM(W,X,Y,Z)^{a,b}_{r,s,t}$ has parameters $a,b\in\{\Tri,\Inv\}$ and $r,s,t\in\{0,1\}$. We define a few auxiliary matrices. Let $T_1',T_2',T_3'$ be the permutation matrices of $(1,3)(2,4)$, $(1,2)(3,4)$, and $(1,4)(2,3)$, respectively.  Define $4\times 4$ matrices $S_1',S_2',S_3'$ as \[\diag(1,(-1)^t, (-1)^s,(-1)^{s+t}),\; \diag(1,(-1)^r, 1,(-1)^r), \text{ and }\diag(1,(-1)^{r+t}, (-1)^s,(-1)^{r+s+t}),\] respectively.  Now define $T_d= T_d^\prime\otimes I_4$ and $S_d= S_d^\prime\otimes I_4$ for $d=1,2,3$, so that $T_dHS_d^\intercal$ is
  \[\HM(Y,Z, W,X)^{a,b}_{r,s,t},\quad \HM(X,W,Z,Y)^{a,b}_{r,s,t},\quad \HM(Z,Y,X,W)^{a,b}_{r,s,t},\]
  respectively. Selective multiplication of rows and columns by $-1$ yields the last part of \eqref{eq1}; e.g.\ $\alpha=\beta=-1$ and $\delta=\gamma=1$ corresponds to multiplying columns $1$ to $2n$ and  rows $2n+1$ to $4n$  of $H$ by $-1$.

\item Let  $T= T^\prime\otimes I_4$ where $T^\prime$ is  the permutation matrix of $(2,3,4)$. Now  $TMT^\intercal =\HM(W,Y,Z,X)^{{ b},{ ab}}_{1,1,1}$ and $T\HM(W,Y,Z,X)^{{ b},{ ab}}_{1,1,1}T^\intercal = \HM(W,Z,X,Y)^{{ ab},{ a}}_{1,1,1}$. If $(a,b)=(\Tri,\Tri)$, then define  $L=\diag(P,P,P,P)$ and $R=\diag(Q,Q,Q,Q)$; then $LHR^\intercal$ has shape \figa. For  $(a,b)=(\Inv,\Inv)$ let $S'$ be  the  permutation matrix of $(2,4)$, and define  $L=\diag(P,Q,Q,P)$. For $(a,b)=(\Inv,\Tri)$ let $S'$  be  the permutation matrix of $(3,4)$, and let  $L=\diag(P,P,Q,Q)$. For  $(a,b)=(\Tri,\Inv)$ let $S'$ be  the permutation matrix of $(2,3)$, and define $L= \diag(P,Q,P,Q)$. In all cases, $R=I_4 \otimes Q$ and $S=S'\otimes I_p$ bring $SLH^\intercal R^\intercal S^\intercal$ in the form \figb.
Let $T_1'$ and $T_2'$ be the permutation matrices of $(2,3)$ and $(2,4)$, respectively, and define $T_d=T_d'\otimes I_p$. It follows that
$T_1 \HM(W,X,Y,Z)^{{ a},{ b}}_{1,1,1}T_1^\intercal =\HM(W,Y,X,-Z)^{{ b},{ a}}_{1,1,1}$ and $T_2\HM(W,X,Y,Z)^{{ a},{ b}}_{1,1,1}T_2^\intercal=\HM(W,Z,Y,-X)^{{ ab},{ b}}_{1,1,1}$.

\item  A counting argument \cite[Section 19.2.2]{deLauneyFlannery} shows that $p$ must be the sum of two squares, and the latter holds if and only if $p\equiv 1 \bmod 4$ by Fermat's theorem on sums of two squares.
If $S'$ is the permutation matrix of $(2,3)$, then $S=S'\otimes I_p$ satisfies $S\HM(W,X,Y,Z)^{a,b}_{0,1,0} S^\intercal = \HM(W,Y,X,Z)^{b,a}_{1,0,0}$. Now let  $S'$ be the permutation matrix of $(3,4)$ and multiply its top-left entry by $-1$.  Then $S=(-S')\otimes I_p$ satisfies  $S\HM(W,X,Y,Z)^{ a, b}_{1,1,0} S^\intercal=\HM(W,-X,-Z,-Y)^{a,ab}_{1,0,0}$.

\item Let $T^\prime$ be  the permutation matrix corresponding to the cyclic permutation $(2,3)$, and define $T= T^\prime\otimes I_4$. It follows that  $T\HM(W,X,Y,Z)^{{ \Tri},{\Inv}}_{1,1,0}T^\intercal =\HM(W,Y,X,Z)^{{ \Inv},{ \Tri}}_{1,1,0}$.\qedhere
\end{iprf}
\end{proof}

Let $H=\HM(W,X,Y,Z)^{a,b}_{r,s,t}$  and denote by $w,x,y,z$ be the sums of the first rows of the blocks $W,X,Y,Z$, respectively.  Let $J$ be the all-ones matrix of order $p$; multiplying  \eqref{wm1e1} by $J$ yields
\begin{equation}\label{sum}
w^2+x^2+y^2+z^2=4p.
\end{equation}
A quadruple $(w,x,y,z)$ of integers with \eqref{sum} is a \emph{decomposition} of $4p$.

\begin{theorem}\label{propequivalences}
Let $H$ be  CHM of order $4p$ with $p>3$ a prime.
\begin{ithm}
\item The matrix $H$ is equivalent to some $\HM(W,X,Y,Z)^{a,b}_{r,s,t}$ where
\begin{ithm}
\item[$\bullet$] $(a,b)\in\{(\Tri,\Tri),(\Tri,\Inv)\}$ and $(r,s,t)=(1,1,1)$,
\item[$\bullet$] or $p\equiv 1\bmod 4$ and  $(a,b)\in\{(\Tri,\Tri),(\Tri,\Inv),(\Inv,\Inv)\}$ and $(r,s,t)=(1,1,0)$.
\end{ithm}
\item[\rm{b})]  If $p\equiv 3\bmod 4$, or if $p\equiv 1\bmod 4$ and the block row sums $w,x,y,z$ satisfy $wz\ne xy$, then $H$ is equivalent to a WTM or transposed IM of the form $\HM(W,X,Y,Z)^{a,b}_{1,1,1}$.
\item[\rm{c)}] CHMs of the form  $\HM(W,X,Y,Z)^{a,b}_{1,1,0}$ can only exist if $p\equiv 1\bmod 4$ and the row sums satisfy $(w,x,y,z)=(w,\alpha w,z,\alpha z)$ or $(w,x,y,z)=(w,\alpha z, w,\alpha z)$, where $2p=w^2+z^2$ and $\alpha\in \{\pm 1\}$.
\end{ithm}
In all cases, the matrices $W, X, Y, Z$ are $p\times p$ back-circulant.
\end{theorem}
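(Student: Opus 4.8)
The plan is to take the three parts in the order (a), (c), (b), since (b) follows from the other two. For part (a), I would start from Theorem \ref{thmMain}, which already presents $H$ as some $\HM(W,X,Y,Z)^{a,b}_{r,s,t}$ with $(r,s,t)\in\{(1,0,0),(0,1,0),(1,1,0),(1,1,1)\}$. Part (c) of Proposition \ref{propequiv} collapses the structures $(1,0,0)$ and $(0,1,0)$ into $(1,1,0)$ and shows they occur only for $p\equiv1\bmod4$, leaving $(r,s,t)\in\{(1,1,1),(1,1,0)\}$. To pin down $(a,b)$ I would compute the orbits of $\{(\Tri,\Tri),(\Tri,\Inv),(\Inv,\Tri),(\Inv,\Inv)\}$ under the equivalences of Proposition \ref{propequiv}: for $(1,1,1)$ the relations in part (b) fuse $(\Tri,\Inv),(\Inv,\Tri),(\Inv,\Inv)$ into one orbit and fix $(\Tri,\Tri)$, giving representatives $\{(\Tri,\Tri),(\Tri,\Inv)\}$; for $(1,1,0)$ part (d) identifies $(\Tri,\Inv)$ with $(\Inv,\Tri)$, giving representatives $\{(\Tri,\Tri),(\Tri,\Inv),(\Inv,\Inv)\}$. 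This is exactly the list in (a).

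The heart of the argument is part (c). Here I would fix $H=\HM(W,X,Y,Z)^{a,b}_{1,1,0}$ and exploit that a back-circulant or circulant $p\times p$ block $M$ has all line sums equal to its first-row sum, so $JM=MJ=(\text{first-row sum})\,J$ for the all-ones matrix $J$, regardless of the $\Tri/\Inv$-operation. Writing the four block-rows of $H$ as $R_1,\dots,R_4$, the identity $HH^\intercal=4pI$ reads $R_iR_j^\intercal=4p\,\delta_{ij}I_p$; sandwiching by $J$ and using $J^2=pJ$ converts this into $\langle\vec{s}_i,\vec{s}_j\rangle=4p\,\delta_{ij}$, where $\vec{s}_1=(w,x,y,z)$, $\vec{s}_2=(x,-w,z,-y)$, $\vec{s}_3=(y,z,-w,-x)$, $\vec{s}_4=(z,-y,-x,w)$ collect the block row sums. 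The diagonal part recovers \eqref{sum}, and the single non-trivial off-diagonal relation $\langle\vec{s}_1,\vec{s}_4\rangle=2(wz-xy)=0$ gives $wz=xy$.

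I would then solve the Diophantine system $wz=xy$, $w^2+x^2+y^2+z^2=4p$ under the parity constraint that $w,x,y,z$ are all odd (each is a sum of $p$ signs with $p$ odd). From $wz=xy$, writing $g=\gcd(w,x)$, $w=gw'$, $x=gx'$ with $\gcd(w',x')=1$ forces $(w,x,y,z)=(gw',gx',mw',mx')$ for some integer $m$, so $4p=(g^2+m^2)(w'^2+x'^2)$. Listing the factorisations of $4p$ into two sums of two squares, the primitive factor $w'^2+x'^2$ cannot be $1$, $4$, $p$ or $4p$ (each would force a zero, hence even, row sum, or violate coprimality), leaving $w'^2+x'^2\in\{2,2p\}$; these two cases produce, after absorbing a block sign via Proposition \ref{propequiv}(a) if needed (which preserves the $(1,1,0)$ shape), exactly the two stated patterns, with the common relation $2p=w^2+z^2$. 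Since $2p$ is then a sum of two squares, Fermat's theorem forces $p\equiv1\bmod4$, reconfirming Proposition \ref{propequiv}(c).

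Finally, part (b) falls out. If $p\equiv3\bmod4$, part (a) leaves only $(r,s,t)=(1,1,1)$, and part (b) of Proposition \ref{propequiv} identifies such a matrix as a WTM when $(a,b)=(\Tri,\Tri)$ and as a transposed IM otherwise. If $p\equiv1\bmod4$ and $wz\ne xy$, then part (c) rules out the $(1,1,0)$ form (it would force $wz=xy$), so again $H$ is of type $(1,1,1)$; I would first note that the substitutions in Proposition \ref{propequiv}(a) permute and sign $(w,x,y,z)$ so that $wz-xy$ is only multiplied by $\pm1$, making the hypothesis $wz\ne xy$ well defined on the equivalence class. The main obstacle is the number-theoretic bookkeeping in part (c): correctly enumerating the representations of $4p$ as a product of two sums of two squares, using oddness to discard the degenerate factorisations, and matching the two survivors to the stated row-sum families.
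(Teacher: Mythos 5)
Your proposal is correct, and it reaches all three parts, but the decisive number-theoretic step is handled by a genuinely different argument than the paper's. The shared skeleton: part (a) is, in both treatments, Theorem \ref{thmMain} combined with the orbit computation from Proposition \ref{propequiv} (your bookkeeping of the $(a,b)$-orbits under parts (b) and (d) of that proposition is exactly right); and the relation $wz=xy$ for the $(1,1,0)$ form is in both treatments obtained by hitting the orthogonality conditions with the all-ones matrix --- the paper multiplies the single block identity $W(Z^{ab})^\intercal+Z(W^{ab})^\intercal-X(Y^{ab})^\intercal-Y(X^{ab})^\intercal=0$ by $J$, while you package the same computation as $\langle \vec{s}_i,\vec{s}_j\rangle = 4p\,\delta_{ij}$ for the four row-sum vectors, correctly observing that $\langle \vec{s}_1,\vec{s}_4\rangle=2(wz-xy)$ is the only nontrivial constraint. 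Where you genuinely diverge is in solving $\{wz=xy,\ w^2+x^2+y^2+z^2=4p\}$: the paper substitutes $z=xy/w$, deduces $w^2+y^2\mid 4pw^2$, runs a divisibility case distinction, invokes the Two-Square Theorem to exclude $3p$ and the uniqueness of $2p$ as a sum of two squares, and closes with ``a case distinction''; you instead set $g=\gcd(w,x)$, parametrize $(w,x,y,z)=(gw',gx',mw',mx')$, factor $4p=(g^2+m^2)(w'^2+x'^2)$, and use parity (every block row sum is a sum of $p$ signs, hence odd) to force both factors to be $\equiv 2 \bmod 4$, so that $\{g^2+m^2,\,w'^2+x'^2\}=\{2,2p\}$; the two row-sum families and $p\equiv 1\bmod 4$ (since $2p$ is a sum of two odd squares) then drop out directly. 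Your route buys a cleaner argument --- no exclusion of $3p$, no appeal to uniqueness of the representation of $2p$ --- at the price of using oddness of the row sums, which the paper never needs. You are also more careful than the paper on two points: you note that the solution $(w,x,-w,-x)$ must be converted to the stated pattern by a block sign change from Proposition \ref{propequiv}(a), which preserves the $(1,1,0)$ shape (the paper hides this inside its final ``case distinction''), and you record that those moves only multiply $wz-xy$ by $\pm 1$, so the hypothesis of part (b) is stable under them; both proofs share the same implicit convention in (b) that $wz\ne xy$ refers to the row sums of the block form produced by part (a).
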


\begin{proof}
\begin{iprf}
\item Theorem \ref{thmMain} gives the general structure of a CHM, so the first claim is  Proposition \ref{propequiv}.
\item If $(r,s,t) \neq (1,1,1)$ then $p\equiv 1\bmod 4$ and up to equivalence $H=\HM(W,X,Y,Z)^{a,b}_{1,1,0}$, by part (c) of Proposition \ref{propequiv}.
Then $HH^\intercal=4pI_{4p}$ yields  $W(Z^{ab})^\intercal+Z(W^{ab})^\intercal-X(Y^{ab})^\intercal-Y(X^{ab})^\intercal=0$; multiplication by the all-ones matrix gives the condition $wz=xy$ on the row sums of $W, X, Y, Z$. So, if $wz\ne xy$, then $(r,s,t)=(1,1,1)$, and $H$ is equivalent to a WTM or transposed~IM.
\item By part b), we have $wz=xy$.  Substituting  $z=xy/w$ into $w^2+x^2+y^2+z^2=4p$ and solving for $x^2$  yields $x^2=\frac{4pw^2}{w^2+y^2} - w^2$. Since this is an integer, $w^2+y^2$ divides $4pw^2$; we proceed with a case distinction.
 If $p$ divides $w^2+y^2$, then $w^2+x^2+y^2+z^2=4p$ shows that $p$ also divides $x^2+z^2$. If $p$ does not divide  $w^2+y^2$, then $w^2+y^2$ must divide $4w^2$. The divisors of $4w^2$ greater than $w^2$ are $2w^2$ and $4w^2$, so we have $y^2=w^2$ or $y^2=3w^2$. In the former case, $w^2+y^2 = 2w^2$, so $x^2$ must be $2p-w^2$; in particular, $p$ divides $x^2+w^2$. In the latter case, $x^2$ must be $p-w^2$, so $p$ divides $x^2+w^2$. Either way, $p$ divides $x^2+w^2$, and then also $z^2+y^2$. As a result, we must have $w^2+x^2 \in \{p,2p,3p\}$ and $y^2+z^2 \in \{3p,2p,p\}$, or   $w^2+y^2 \in \{p,2p,3p\}$ and $x^2+z^2 \in \{3p,2p,p\}$. By the Two-Square Theorem \cite[p.\ 140-142]{beiler}, we cannot write $3p$  as the sum of two squares, which forces  $w^2+x^2 = 2p = y^2+z^2$   or   $w^2+y^2 = 2p = x^2+z^2$. As shown in \cite[p.\ 141]{beiler}, up to signs and order, $2p$ can be written uniquely as a sum of two squares. The equation $wz=xy$ together with a case distinction proves the claim. \qedhere
\end{iprf}
\end{proof}

\section{Classification}\label{secClass}
We describe a new algorithmic approach to classify CHMs of order $4p$.  By Theorem \ref{propequivalences}, every such CHM is equivalent to a matrix $H=\HM(W,X,Y,Z)^{a,b}_{r,s,t}$ with  $(a,b)\in\{(\Tri,\Tri),(\Tri,\Inv))\}$ and $(r,s,t)=(1,1,1)$, or $p\equiv 1\bmod 4$ and $(a,b)\in\{(\Tri,\Tri),(\Tri,\Inv),(\Inv,\Inv)\}$ and  $(r,s,t)=(1,1,0)$.  We now describe two methods that can be used to trim the search spaces. As before, denote by $w,x,y,z$ the corresponding block row sums. Since $p>3$, we have  $0<w,x,y,z < p$.

\subsection{Eigenvalues}\label{secM1} The first method is based on  \cite{Holzamann-etal}. The \emph{Gramian} of a square matrix $U$ with entries $\pm1$ is  $UU^\intercal$. If $U$ and $V$ are circulant and back-circulant, respectively, with the same first row of length $p$, then  $C=UU^\intercal=VV^\intercal$ is symmetric and circulant, with first row of the form $(c_0,c_1,c_2,\ldots,c_2,c_1)$. If $U$ is a block of $H$, then \eqref{sum} implies that the first row of $U$ is not all $1$s or all $-1$s; now \cite[Proposition~1.1]{Ingleton} proves that $U$ has rank $p$, so its Gramian  is positive definite. Together with $HH^\intercal=4pI_{4p}$, we deduce that
\begin{equation}\label{gramequation}
WW^\intercal+XX^\intercal+YY^\intercal+ZZ^\intercal = 4pI_p,
\end{equation}
independent of the actions of $a$ and $b$. Let $\omega$ be a primitive $p$-th root of unity and let  $T$ be the permutation matrix of the $p$-cycle $(1,2,\ldots,p)$. For each $q\in\{0,\ldots,p-1\}$ the vector $v_{q}=((\omega^q)^0,(\omega^q)^1,\dots, (\omega^q)^{p-1})$ is an eigenvector of $T$ with eigenvalue $\omega^{p-q}$. Let $U\in\{W,X,Y,Z\}$. By construction, $UU^\intercal$ is  symmetric and circulant, hence a polynomial in $T$. It follows from the above discussion that $UU^\intercal$ has eigenvectors $\{v_0,\ldots,v_{p-1}\}$, and the eigenvalue corresponding to $v_q$ is
\begin{equation*}
\lambda_{q,U} = (u_0\cdot u_0) +\sum\nolimits_{t=1}^{(p-1)/2} 2(u_0\cdot u_t) \cos\left(2qt\pi/p\right)>0,
\end{equation*}
where $u_0\cdot u_t$ is the dot product of the rows $u_0$ and $u_t$ of $U$; note that these dot products are the entries in the first row of $UU^\intercal$. If  $B\subseteq \{W,X,Y,Z\}$, then \eqref{gramequation} shows
\begin{equation}\label{eigensum}
\sum\nolimits_{U\in B}\lambda_{q,U} \leqslant 4p
\end{equation}
for all $q=0,\dots,p-1$; these inequalities help to trim the search space significantly.

\subsection{Linear equations}\label{secM2}For the second method, let $U$ and $V$ be back-circulant with rows $u_0,\ldots,u_{p-1}$ and $v_0,\ldots,v_{p-1}$, respectively. Recall that $a,b\in\{\Tri,\Inv\}$, and denote by $U^\Inv$ and $V^\Inv$ the circulant matrices which share the same first row with $U$ and $V$, respectively. As above, let  $T$ be the permutation matrix of the $p$-cycle $(1,2,\ldots,p)$.  A straightforward calculation shows that $UV^\intercal = V^\Inv(U^\Inv)^\intercal= \sum\nolimits_{l=0}^{p-1}(u_0\cdot v_l)T^l$ and $VU^\intercal = \sum\nolimits_{l=0}^{p-1}(u_0\cdot v_{p-l})T^l$ are both circulant. Recall that  $H=\HM(W,X,Y,Z)_{r,s,t}^{a,b}$ and suppose  the blocks  $W,X,Y,Z$ have rows $w_i,x_i,y_i,z_i$ with $i=0,\ldots,p-1$, respectively. If $H$ is a HM, then, similar to \eqref{gramequation}, the condition $HH^\intercal=4pI_{4p}$ yields other equations involving these blocks, such as
\[WX^\intercal +(-1)^r X^a(W^a)^\intercal+Y^b(Z^b)^\intercal+(-1)^rZ^{ab}(Y^{ab})^\intercal = 0I_p.\]
Via the above formulas, this translates to $\sum\nolimits_{l=0}^{p-1}\left(w_0x_l+(-1)^rw_0x_{l^{\hat{a}}}+z_0y_{l^{\hat{b}}}+(-1)^rz_0y_{l^{ab}}\right)T^l = 0 I_p,$
where for $l\in\{0,\ldots,p-1\}$ we define \[l^\Tri=l^{\widehat{\Inv}}=l\quad\text{and}\quad l^\Inv=l^{\widehat{\Tri}}=p-l.\]
It follows that for all $l$ we have $w_0x_l+(-1)^rw_0x_{l^{\widehat{a}}}+z_0y_{l^{\widehat{b}}}+(-1)^rz_0y_{l^{ab}}=0$,
that is
\begin{eqnarray}
\label{eq4}&&z_0\left(y_{l^{\widehat{b}}}+(-1)^ry_{l^{ab}}\right)=- w_0\left(x_l-(-1)^rx_{l^{\widehat{a}}}\right).
\end{eqnarray}
Similar equations follow from considering other $p\times p$ blocks in the product $HH^\intercal=4p I_{4p}$, namely
\begin{equation}\label{eq5}
\begin{aligned}
z_0\left(y_{l^{\widehat{b}}}+(-1)^ry_{l^{ab}}\right)&=- w_0\left(x_l+(-1)^rx_{l^{\widehat{a}}}\right)\\
z_0\left((-1)^t x_{l^{\widehat{a}}}+(-1)^{s+t}x_{l^{ab}}\right) &=-w_0\left(y_{l}+(-1)^s y_{l^{\widehat{b}}}\right)\\
z_0\left(w_{p-l}+(-1)^{r+s+t}w_{l^{ab}}\right) &=-x_0\left((-1)^{r+t}y_{l^a}+(-1)^sy_{l^{\widehat{b}}}\right)\\
z_0\left((-1)^{r+t}w_{l^{\widehat{a}}}+(-1)^sw_{l^b}\right) &=-x_0\left(y_{l}+(-1)^{r+s+t}y_{l^{\widehat{ab}}}\right)\\
z_0\left(x_{p-l}+(-1)^s  x_{l^b}\right) &=-w_0\left((-1)^t y_{l^a}+(-1)^{s+t}y_{l^{\widehat{ab}}}\right)\\
z_0\left(y_{p-l}+(-1)^ry_{l^a}\right)&=- w_0\left(x_{l^b}+(-1)^rx_{l^{\widehat{ab}}}\right)\\
z_0\cdot (1,\ldots,1)&= z.
\end{aligned}
\end{equation}
Given $W,X,Y$ and fixing $r,s,t,a,b$, Equations \eqref{eq4} -- \eqref{eq5} yield a linear equation system for $z_0$, whose solution space contains all first rows $z_0$ of $Z$ such that $\HM(W,X,Y,Z)^{a,b}_{r,s,t}$ is a CHM.

\subsection{Algorithm} Based on Proposition \ref{propequiv}, Theorem \ref{propequivalences}, and the ideas described in Sections \ref{secM1} and \ref{secM2},  Algorithm \ref{alg1} describes a method to classify all HMs $\HM(W,X,Y,Z)^{a,b}_{r,s,t}$ of order $4p$ with  $p>3$ a prime up to equivalence; as above, $w,x,y,z$ are the corresponding block row sums.

{\small
\begin{algorithm}
\caption{An algorithm to classify all CHMs of order $4p$ up to equivalence}\label{alg1}
\begin{algorithmic}[1]
  \Require a prime $p>3$
\Ensure a list of all CHMs of order $4p$, up to equivalence
 \State initialise $L$ as the empty list
\State determine all decompositions $\mathcal{D}=\{(w,x,y,z)\in \mathbb{Z}^4 \mid w^2+x^2+y^2+z^2=4p\}$, see \eqref{sum}
\State discard the elements of $\mathcal{D}$ that produce equivalent matrices, according to  \eqref{eq1}
\For{$(w,x,y,z)\in \mathcal{D}$ }
\State construct $\mathcal{W}$ as the set of back-circulant matrices over $\{\pm1\}$ of order $p$ with row sum $w$
\State similarly, construct $\mathcal{X}, \mathcal{Y}$ for row sums $x$ and $y$, respectively
\For{ $(W,X,Y)\in \mathcal{W}\times \mathcal{X}\times \mathcal{Y}$ satisfying \eqref{eigensum}}
\For{$(a,b)\in \{(\Tri,\Tri),(\Tri,\Inv)\}$}
\State find all $Z$ satisfying \eqref{eq4}--\eqref{eq5}, and construct $H=\HM(W,X,Y,Z)^{a,b}_{1,1,1}$
\If{ $H$ is Hadamard and $H\notin L$ up to equivalence}{ add $H$ to $L$}
\EndIf
\EndFor
\If{$p\equiv 1\bmod 4$ and $wz=xy$ }
\For{$(a,b)\in \{(\Tri,\Tri),(\Tri,\Inv), (\Inv,\Inv)\}$}
\State find all $Z$ satisfying \eqref{eq4}--\eqref{eq5}, and construct $H=\HM(W,X,Y,Z)^{a,b}_{1,1,0}$
\If{ $H$ is Hadamard and $H\notin L$ up to equivalence}{ add $H$ to $L$}
\EndIf
\EndFor
\EndIf
\EndFor
\EndFor
\State \Return $L$.
\end{algorithmic}
\end{algorithm}
}

\subsection{Implementation}\label{secImp}
As a proof of concept, we have implemented our algorithm for the computer algebra system Magma \cite{magma}; for HMs of the orders we have considered $(p\leq 13)$, equivalence of HMs can be tested efficiently via the inbuilt Magma function {\tt IsHadamardEquivalent}.

The classification of CHMs for $p\leq 7$ was first reported by \'{O} Cath\'ain \& R\"oder \cite{CatRod}; our results agree with their classification, and there are $1,3,6$ CHMs for $p=3,5,7$. For $p=11$ and $p=13$, our algorithm found $63$ and $336$ non-equivalent CHMs, respectively. In Table \ref{tabDistro} we give more details for the explicit computations and list the number of non-equivalent CHMs for each group and action. However, note that Theorem \ref{propequivalences} shows that it is sufficient to consider only group $L_{1,1,1}$ and actions $(\Tri,\Tri)$ and $(\Tri,\Inv)$, and group $L_{1,1,0}$ and actions $(\Tri,\Tri)$, $(\Tri,\Inv)$, and $(\Inv,\Inv)$; moreover, group $L_{1,1,0}$ has to be considered only for $p\equiv 1\bmod 4$ and decompositions that satisfy $wz=xy$. Proposition \ref{propequiv} and Theorem \ref{propequivalences} can be used to reduce the number of required computations even further, by considering swapping the entries of the decompositions of $4p$: it follows that in Table \ref{tabDistro} only the numbers highlighted in grey have to be computed. (Algorithm 1 is easily modified to report these values.) Recall that if $p\equiv 3\bmod 4$, then every CHM is equivalent to a WTM or transposed IM. The same holds for $p\equiv 1\bmod 4$ and the group $L_{1,1,0}$ if the block row sums satisfy $wz\ne xy$. Thus, our computations indicate that for $p\equiv 1\bmod 4$, approximately 2/3 of all CHMs are WTM or transposed IM, while 1/3 are not of this form (one CHM for $p=5$ and $102$ CHMs for $p=13$). Since WMs and IMs have enjoyed some attention in the literature, it will be interesting to further investigate the new types of CHMs arising for $p\equiv 1\bmod 4$.  Our computations led to the following conjecture.

\begin{conjecture}
\begin{ithm}
\item There are no CHMs of the form $\HM(W,X,Y,Z)_{1,1,0}^{a,b}$ for $(a,b)\in\{(\Tri,\Tri),(\Tri,\Inv)\}$, respectively. Equivalently,
for circulant blocks $W,X,Y,Z$ there are no CHMs of the form
\begin{equation*}\arraycolsep=1.5pt\def\arraystretch{0.8}
 \left[\begin{array}{cccc}
W & {\color{white}-}X & {\color{white}-}Y & {\color{white}-}Z\\
X & -W & {\color{white}-}Z & -Y\\
Y & {\color{white}-}Z & -W & -X\\
Z & -Y & -X & {\color{white}-}W
\end{array}\right]\text{ and }\
 \left[\arraycolsep=1.5pt\def\arraystretch{0.8}\begin{array}{llll}
W & {\color{white}-}X & {\color{white}-}Y & {\color{white}-}Z\\
X & -W & {\color{white}-}Z & -Y\\
Y^\intercal & {\color{white}-}Z^\intercal & -W^\intercal & -X^\intercal\\
Z^\intercal & -Y^\intercal & -X^\intercal & {\color{white}-}W^\intercal
\end{array}\right].
\end{equation*}
\item For $p\equiv 1\bmod 4$, there exist HMs of the form $\HM(W,X,Y,Z)_{1,1,0}^{\Inv,\Inv}$, and these are not equivalent to WTM or (transposed) IM, yielding a new family of CHMs. Equivalently, for $p\equiv 1\bmod 4$ there exist HMs of the form
\begin{equation*}\arraycolsep=1.5pt\def\arraystretch{0.8}
 \left[\begin{array}{cccc}
W & X & Y & Z\\
X^\intercal & -W^\intercal & Z^\intercal & -Y^\intercal\\
Y^\intercal & Z^\intercal & -W^\intercal & -X^\intercal\\
Z & -Y & -X & W
\end{array}\right]
\end{equation*}
with circulant blocks $W,X,Y,Z$, and these HMs are not equivalent to WTM or (transposed) IM.
\end{ithm}
\end{conjecture}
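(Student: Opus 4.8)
The plan is to treat both parts through the discrete Fourier transform of the circulant blocks, working with the circulant presentations given in the statement and using the rigidity from Theorem \ref{propequivalences}(c): any candidate of type $(r,s,t)=(1,1,0)$ forces $p\equiv 1\bmod 4$ and $2p=w^2+z^2=x^2+y^2$, so the block row sums are essentially pinned down. Writing $\hat U(q)=\sum_j u_j\omega^{qj}$ for the eigenvalues of a circulant block $U$ (transposing a block conjugates its eigenvalue), the condition $HH^\intercal=4pI_{4p}$ of \eqref{gramequation} decouples into the $p$ statements that the $4\times4$ complex matrix $\hat H(q)$, obtained by replacing each block of the array by its (possibly conjugated) eigenvalue, satisfies $\hat H(q)\hat H(q)^{*}=4p\,I_4$ for every $q\in\{0,\dots,p-1\}$. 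This makes the eigenvalue bookkeeping of Section \ref{secM1} exact and turns the linear system \eqref{eq4}--\eqref{eq5} into per-frequency scalar identities.

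For part (a) I would write out $\hat H(q)$ for the actions $(\Tri,\Tri)$ and $(\Tri,\Inv)$. In the $(\Tri,\Tri)$ presentation no block is transposed, and orthogonality of the rows of $\hat H(q)$ reduces to the three families
\[\mathrm{Im}\big(\hat W(q)\overline{\hat X(q)}\big)+\mathrm{Im}\big(\hat Y(q)\overline{\hat Z(q)}\big)=0,\qquad \mathrm{Im}\big(\hat W(q)\overline{\hat Y(q)}\big)+\mathrm{Im}\big(\hat X(q)\overline{\hat Z(q)}\big)=0,\]
\[\mathrm{Re}\big(\hat W(q)\overline{\hat Z(q)}\big)=\mathrm{Re}\big(\hat X(q)\overline{\hat Y(q)}\big),\]
holding for all $q$, with an analogous but even more constrained system for $(\Tri,\Inv)$. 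Equivalently, in the symmetric back-circulant presentation the same content reads $[W,X]=-[Y,Z]$, $[W,Y]=-[X,Z]$, and $WZ+ZW=XY+YX$, together with $W^2+X^2+Y^2+Z^2=4pI_p$. The goal is to show this over-determined system admits no $\pm1$ solution with the prescribed row sums. I would attack it by feeding the two commutator identities into a symmetric invariant of the eigenvalue tuples $(\hat W(q),\hat X(q),\hat Y(q),\hat Z(q))$, or by passing to the associated relative $(4p,2,4p,2p)$-difference set and extracting a counting or parity obstruction from its parameters. The hard part — and the reason the statement remains only a conjecture — is producing such an invariant that closes uniformly in $p$ rather than merely case by case.

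Part (b) is where the Fourier picture explains the dichotomy: for the action $(\Inv,\Inv)$ the transpose pattern conjugates exactly the blocks needed to make the first two (commutator) families collapse to tautologies, since scalars commute, leaving only the single family $\mathrm{Re}(\hat W(q)\overline{\hat Z(q)})=\mathrm{Re}(\hat X(q)\overline{\hat Y(q)})$ for all $q$. The plan for existence is therefore to realise this one family by an explicit template for every $p\equiv 1\bmod 4$: build the four circulant blocks from the quadratic character of $\mathbb{F}_p$ and the decomposition $2p=w^2+z^2$ of Theorem \ref{propequivalences}(c) (a Paley/Williamson-flavoured choice), and verify the remaining frequency condition by Gauss-sum evaluation; the classified cases $p=5,13$ should fix the correct template. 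For the non-equivalence claim I would use that the set of groups over which a Hadamard matrix is cocyclically developed is a Hadamard-equivalence invariant: a WTM or transposed IM is developed over the extension whose Sylow $2$-subgroup is $Q_8=L_{1,1,1}$ (Figure \ref{figWM}), whereas the new matrices are developed over $C_4\times C_2=L_{1,1,0}$; showing the latter are not simultaneously cocyclic over any $L_{1,1,1}$-extension — for instance by ruling out a reindexing of the relative difference set over $Q_8\times C_p$ — separates the families. The main obstacles here are obtaining the construction valid for all $p$ and making the non-equivalence argument uniform rather than verifying it only in the computed range.
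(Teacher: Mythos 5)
The statement you are trying to prove is not a theorem in the paper at all: it is stated as a conjecture, and the paper offers no proof of either part. Its only support is the machine classification of Section \ref{secImp} (Table \ref{tabDistro}), where for $p=5$ and $p=13$ the only CHMs of type $L_{1,1,0}$ that appear have action $(\Inv,\Inv)$ (one matrix for $p=5$, $102$ for $p=13$), and none appear with actions $(\Tri,\Tri)$ or $(\Tri,\Inv)$. So there is no paper proof to compare against, and the honest question is whether your proposal closes the conjecture. It does not, and you say so yourself: for part (a) the entire content would be an invariant or counting obstruction that ``closes uniformly in $p$,'' which you name as the missing hard step; for part (b) the construction from the quadratic character and a Gauss-sum verification is proposed but never carried out, and the non-equivalence argument hinges on showing the new matrices are not \emph{also} cocyclic over an extension with Sylow $2$-subgroup $Q_8$ --- a genuine difficulty, since a single Hadamard matrix can be cocyclic over several groups, and membership in the $L_{1,1,0}$ family does not by itself exclude membership in the $L_{1,1,1}$ family. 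Each of these three gaps is exactly where the mathematical difficulty lives, so the proposal is a research program, not a proof.

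That said, the structural observations in your outline are sound and worth keeping. Because all blocks (and their transposes) are polynomials in the same cyclic shift, the Fourier decoupling of $HH^\intercal=4pI_{4p}$ into per-frequency conditions on the $4\times 4$ matrices $\hat H(q)$ is legitimate, and your computation that for the action $(\Inv,\Inv)$ the row-orthogonality conditions of the form $[W,X]+[Y,Z]=0$ become tautologies (circulants commute), while for $(\Tri,\Tri)$ and $(\Tri,\Inv)$ they impose nontrivial commutator-type constraints, is a plausible conceptual explanation of the dichotomy the computations exhibit, and is consistent with the constraint $2p=w^2+z^2=x^2+y^2$ forced by Theorem \ref{propequivalences}(c). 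But an explanation of why a pattern should hold is not a proof that it does; as it stands, your attempt leaves the conjecture in the same state the paper does.
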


\begin{table}[!ht]{
\scalebox{0.85}{\hspace{-12pt}\begin{tabular}{c}$ \arraycolsep=2pt
\begin{array}{ r | l | r  r  r  r | r  r  r  r | r  r  r  r | r  r  r  r  ||c}
	  \multicolumn{2}{}{}   & \multicolumn{4}{c|}{L_{1,1,1,}}     &  \multicolumn{4}{c|}{L_{1,0,0}}   &  \multicolumn{4}{c|}{L_{0,1,0}}    & \multicolumn{4}{c||}{L_{1,1,0}} & \#  \\ \hline
	p& $\text{decomp}$  & (\Tri,\Tri) & (\Tri,\Inv) & (\Inv,\Tri) & (\Inv,\Inv) &(\Tri,\Tri) & (\Tri,\Inv) & (\Inv,\Tri) & (\Inv,\Inv) & (\Tri,\Tri) & (\Tri,\Inv) & (\Inv,\Tri) & (\Inv,\Inv) & (\Tri,\Tri) & (\Tri,\Inv) & (\Inv,\Tri) & (\Inv,\Inv)  \\ \hline \hline
	5 &\left[\pm1,1,3,3\right] & \colorbox{lightgray}{1} & \colorbox{lightgray}{2} &  \colorbox{lightgray}{1} & 1 &0 & 0 & 1 & 0 & 0 & 0 & 0 & 0 & \colorbox{lightgray}{0} & \colorbox{lightgray}{0} & 0 & \colorbox{lightgray}{1} \\
	&\left[\pm 1,3,1,3\right] & 1 & 1 & 2 & 1 & 0 & 0 & 0 & 0 & 0 & 1 & 0 & 0 & 0 & \colorbox{lightgray}{0} & 0 & 1 \\
	&\left[\pm 1,3,3,1\right] & 1 & 1 & 1 & 2 & 0 & 0 & 1 & 0 & 0 & 1 & 0 & 0 & 0 & 0 & 0 & 0 \\ \hline
	&\text{non-equiv} & 1 & 2 & 2 & 2 & 0 & 0 & 1 & 0 & 0 & 1 & 0 & 0 & 0 & 0 & 0 & 1 & 3\\ \hline \hline

	7&\left[\pm 1,1,1,5\right] & \colorbox{lightgray}{2} & \colorbox{lightgray}{3} & 3 & 3 & 0 & 0 & 0 & 0 & 0 & 0 & 0 & 0 & 0 & 0 & 0 & 0 \\
	&\left[\pm 1,3,3,3\right] & \colorbox{lightgray}{1} & \colorbox{lightgray}{3} & 3 & 3 & 0 & 0 & 0 & 0 & 0 & 0 & 0 & 0 & 0 & 0 & 0 & 0 \\ \hline
	&\text{non-equiv} & 3 & 6 & 6 & 6 & 0 & 0 & 0 & 0 & 0 & 0 & 0 & 0 & 0 & 0 & 0 & 0 & 6\\ \hline \hline

	11&	\left[\pm 1,5,3,3\right] &  \colorbox{lightgray}{2} &  \colorbox{lightgray}{27} &  \colorbox{lightgray}{38} & 38 & 0 & 0 & 0 & 0 & 0 & 0 & 0 & 0 & 0 & 0 & 0 & 0 \\ 
          &\left[\pm 1,3,3,5\right] & 2 &38 & 38 & 27 & 0 & 0 & 0 & 0 & 0 & 0 & 0 & 0 & 0 & 0 & 0 & 0 \\
	&\left[\pm 1,3,5,3\right] & 2 & 38 & 27 & 38 & 0 & 0 & 0 & 0 & 0 & 0 & 0 & 0 & 0 & 0 & 0 & 0 \\\hline

	&\text{non-equiv} & 2 & 63 & 63 & 63 & 0 & 0 & 0 & 0 & 0 & 0 & 0 & 0 & 0 & 0 & 0 & 0 & 63\\ \hline \hline

	13&\left[\pm 1,1,1,7\right] & \colorbox{lightgray}{2} & \colorbox{lightgray}{74} & 74 & 74 & 0 & 0 & 0 & 0 & 0 & 0 & 0 & 0 & 0 & 0 & 0 & 0 \\
	&\left[\pm 1,1,5,5\right] & \colorbox{lightgray}{2} & \colorbox{lightgray}{42} & \colorbox{lightgray}{52} & 52 & 0 & 0 & 102 & 0 & 0 & 0 & 0 & 0 & \colorbox{lightgray}{0} & \colorbox{lightgray}{0} & 0 & \colorbox{lightgray}{102} \\
	&\left[\pm 1,5,1,5\right] & 2 & 52 & 42 & 52 & 0 & 0 & 0 & 0 & 0 & 102 & 0 & 0 & 0 & \colorbox{lightgray}{0} & 0 & 102 \\
	&\left[\pm 1,5,5,1\right] & 2 & 52 & 52 & 42 & 0 & 0 & 102 & 0 & 0 & 102 & 0 & 0 & 0 & 0 & 0 & 0 \\
	&\left[\pm 3,3,3,5\right] & \colorbox{lightgray}{2} & \colorbox{lightgray}{68} & 68 & 68 & 0 & 0 & 0 & 0 & 0 & 0 & 0 & 0 & 0 & 0 & 0 & 0 \\ \hline
	&\text{non-equiv} & 6 & 234 & 234 & 234 & 0 & 0 & 102 & 0 & 0 & 102 & 0 & 0 & 0 & 0 & 0 & 102 & 336\\ \hline
\end{array}$
\end{tabular}}}
\vspace*{2ex}
\caption{Results of the computations described in Section \ref{secImp}. Entries in rows labelled non-equiv refer to the number of non-equivalent matrices per column. Entries in column labelled \# refer to the total number of CHMs up to equivalence.}\label{tabDistro}
\end{table}

Our computation were performed on a 8-core machine (Intel(R) Core(TM) i7-3770 CPU @ 3.40GHz with 32GB RAM) in combination with Magma V2.23-5.  Calculations for $p>13$ will require a parallelisation of the proposed algorithm and access to a high performance computating facility. For $p=17$, in view of the above conjecture, ignoring WTMs and (transposed) IMs, the only decompositions for $4\cdot 17$ that have to be considered for CHMs $H(W,X,Y,Z)^{a,b}_{1,1,0}$ are  $(3,3,5,5)$ and $(3,5,3,5)$. For $p=19$, all CHMs of order $4p$ are equivalent to WTM or transposed IM.


\end{document}